\crefname{hypothesis}{Hypothesis}{Hypotheses}
\title{The Optimal Production Transport: Model and Algorithm\thanks{Submitted to the editors DATE.
\funding{National Natural Science Foundation of China (Grant No. 12271289). }}}
\author{Jie Fan\thanks{Department of Mathematical Sciences, Tsinghua University, Beijing, P.R. China 
(\email{fanj21@mails.tsinghua.edu.cn}, 
\email{wuth20@mails.tsinghua.edu.cn}.}
\and Tianhao Wu\footnotemark[2]
\and Hao Wu\thanks{Corresponding author. Department of Mathematical Sciences, Tsinghua University, Beijing, P.R. China (\email{hwu@tsinghua.edu.cn}).}
}
\begin{document}
\nolinenumbers

\maketitle

\begin{abstract}
In this paper, we propose the optimal production transport model, which is an extension of the classical optimal transport model. 
We observe in economics, the production of the factories can always be adjusted within a certain range, while the classical optimal transport does not take this situation into account. Therefore, differing from the classical optimal transport, one of the marginals is allowed to vary within a certain range in our proposed model. 
To address this, we introduce a multiple relaxation optimal production transport model and propose the generalized alternating Sinkhorn algorithms, inspired by the Sinkhorn algorithm and the double regularization method. By incorporating multiple relaxation variables and multiple regularization terms, the inequality and capacity constraints in the optimal production transport model are naturally satisfied. Alternating iteration algorithms are derived based on the duality of the regularized model.
We also provide a theoretical analysis to guarantee the convergence of our proposed algorithms. Numerical results indicate significant advantages in terms of accuracy and efficiency.
Furthermore, we apply the optimal production transport model to the coal production and transport problem. Numerical simulation demonstrates that our proposed model can save the production and transport cost by 13.17\%. 

\end{abstract}

\begin{keywords}
Optimal production and transport, Multiple relaxation, Generalized alternating Sinkhorn algorithm, Supply chain
\end{keywords}

\begin{MSCcodes}
49M25, 65K10
\end{MSCcodes}

\section{Introduction}
\label{sec:introduction}
 
The widely studied Optimal Transport~(OT) theory can be traced back to 1781 in France, where Monge first formalized it within the field of civil engineering~\cite{monge1781}. 
Major advances in the OT theory were made by Soviet mathematician and economist Leonid Kantorovich \cite{kantorovich1942,villani2003topics}. 
He introduced a relaxation technique to transform the OT problem into a linear programming problem and further provided an economic interpretation \cite{OTinEconomic}, that is the optimal allocation and utilization of resources in the whole society. 
In 1975, the Nobel Memorial Prize in Economic Sciences, which he shared with Tjalling Koopmans, was given ``for their contributions to the theory of optimum allocation of resources.'' 
Subsequently, OT has been widely generalized and successfully applied in economics, 
including the classical discrete choice model \cite{chiong2016duality,galichon2022cupid},
the partial identification with random sets \cite{galichon2021inference,galichon2011set}, 
the hedonic equilibrium problem \cite{ekeland2004identification,matzkin2003nonparametric},  and the price discrimination and implementability \cite{laffont1988fundamentals}, among others.

In the classical OT problem, researchers consider how to transport goods between fixed production and consumption, in other words, to obtain the optimal transport plan. 
However, in economics, the production between different factories always can be adjusted within a certain range, and various factories may have different costs to produce one unit of goods \cite{kasilingam1998logistics,stock2001strategic,mangan2016global}. 
This situation requires us to find the optimal production transport plan simultaneously with the minimum total cost, i.e., the summation of transport cost and production cost. To the best of our knowledge, this type of extension has not been considered in existing OT models. 
Therefore, it is essential to explore the formulation of this variation. 

In this paper, we generalize the classical OT model to the Optimal Production Transport~(OPT) model, which allows one of the marginal to vary within a certain range. 
Specifically, the OPT model can be expressed as 
\begin{equation}
\label{OPTC}
\begin{aligned}
 		\min_{\gamma, u} & \quad \int_{\Omega_1 \times \Omega_2} C(x,y)\gamma(x,y)dx dy +\int_{\Omega_1}p(x) u(x) dx,\\
\mathrm{s.t.} &  \quad \int_{\Omega_2}\gamma(x,y) d y=u(x), \ \int_{\Omega_1}\gamma(x,y) d x=v(y), \\
		& \quad  \theta(x,y) \leq \gamma(x,y)\leq \eta(x,y), \ \hat{u}(x)\leq u(x) \leq \overline{u}(x),  
\end{aligned}
\end{equation}
where $C$ denotes the transport cost, and $p$ denotes the production cost. $\gamma$ represents the transport plan with the lower bound $\theta$ and the upper bound $\eta$, and $u$ represents the production plan with the lower bound $\hat{u}$ and the upper bound $\overline{u}$. 
Note that when $\theta=0$, $\eta=+\infty$, and $\hat{u}=\overline{u}$, this model degenerates into the classic OT model.

Although there exist a large number of algorithms designed for the classical OT model \cite{peyre2017computational, IBP}, the OPT model brings out new difficulties, which hinder applying these algorithms to the OPT model directly. More specifically, the primary difficulties arise in two aspects: (1) The objective function contains an additional term of a variable marginal compared with the classical OT, and the duality is in a max-min form when directly applying alternating algorithms for the classical OT. (2) The upper bound and lower bound of marginal $u$ introduce inequality constraints and complementary slackness conditions in the duality, which leads to the coupling of dual and primal variables.

To address difficulty (1) in the OPT model, we first rigorously prove that it can be equivalently transformed into a new model. In this model, the production cost can be absorbed into the transport cost such that the objective function is the same as the classical OT. 
For difficulty (2), we introduce the Multiple Relaxation Optimal Production Transport (MR-OPT) model with multiple relaxation variables, which is equivalent to the OPT model. 
We then incorporate multiple regularization terms into the MR-OPT model, inspired by the Double Regularization Method (DRM) \cite{wu2022double} for the Capacity-constrained Optimal Transport~(COT) problem.
From this, we design two types of Generalized Alternating Sinkhorn algorithms~(GAS-I and GAS-II) to solve the regularized MR-OPT model.\footnote{The Iterative Bregman Projections~(IBP) algorithm \cite{IBP} based on the KL divergence is also developed as a baseline for the OPT model, see Appendix \ref{sec:IBP} for related discussion.}

More specifically, in the MR-OPT model, to overcome the difficulty arising from the inequality constraints on the marginal, we introduce additional multiple slack variables to convert inequality constraints into equality constraints. 
By incorporating multiple regularization terms, we define the regularized MR-OPT model, which naturally satisfies the non-negative constraints of the slack variables. 
A critical observation is that solving the regularized MR-OPT model is equivalent to iteratively solving three sets of auxiliary variables in duality, and further, one set of auxiliary variables can be eliminated according to their correlation. 
During each iteration, it is noteworthy that solving each variable corresponds to finding the unique root of a one-dimensional equation, and can be efficiently solved by Newton's method. 
This process constitutes the Generalized Alternating Sinkhorn-I~(GAS-I) algorithm. 
Additionally, by noting the conserved total mass within the MR-OPT model and taking this condition into account, we present the Generalized Alternating Sinkhorn-II (GAS-II) algorithm.   
The convergence and the number of iterations of GAS-II are also theoretically guaranteed.

To illustrate the effectiveness and efficiency of the model and algorithm, we conduct extensive numerical experiments and show the advantages of the GAS algorithms in accuracy, efficiency, and memory consumption. 
For practical applications, we consider the situation of coal production and consumption in several provinces in the Chinese mainland, which is an important and useful problem mentioned in plenty of studies \cite{kantorovich1960mathematical,cao2006coal,kozan2012demand}. 
Based on these existing studies, we formulate the coal production and transport problem as an OPT model and solve it with the GAS algorithms. 
The results demonstrate that our OPT model may provide useful suggestions for national coal production and transport.

The rest of the paper is organized as follows, Section \ref{sec:MRM} introduces and simplifies the discrete optimal production transport model, then presents the multiple relaxation optimal production transport model with multiple regularization terms. 
The two kinds of Generalized Alternating Sinkhorn algorithms~(GAS-I and GAS-II), along with the convergence guarantee of GAS-II, are discussed in Section \ref{MRM-alg}. 
In Section \ref{sec:numerical}, numerical experiments demonstrate the advantages of our proposed algorithms in both accuracy and efficiency. To illustrate the applications of our model, the coal production and transport problem is formulated to the OPT model and solved with the GAS-I algorithm in Section \ref{sec:applicationA}. Finally, Section \ref{sec:conclusions} concludes this paper.

\section{Multiple Relaxation OPT Model}
\label{sec:MRM}
 
Consider $\bm{u} = \left(u_1, \cdots,  u_{N_1}\right), \ \bm{v}=\left( v_1, \cdots, v_{N_2}\right)$
are two discrete probabilistic distributions. $C_{ij}$ is the unit cost of transporting, 
and $p_{i}$ is the unit cost of producing. 
The continuous OPT model~\eqref{OPTC} can be discretized to the following form
\begin{equation}
\label{OPT_d0}
    \begin{gathered}
        \min_{\bm{\gamma}, \bm{u}} \quad \sum_{i=1}^{N_1}\sum_{j=1}^{N_2} C_{ij}\gamma_{ij}+\sum_{i=1}^{N_1}p_i u_i\\
        \mathrm{s.t.} \quad \sum_{j=1}^{N_2} \gamma_{ij} = u_i,  \ \sum_{i=1}^{N_1} \gamma_{ij} = v_j, \ 
        \theta_{ij} \leq \gamma_{ij}\leq \eta_{ij}, \  \hat{u}_i\leq u_i \leq \overline{u}_i, 
    \end{gathered}
\end{equation}
Here, the transport plan $\gamma_{ij}$ represents the mass transported from the $i$-th source to $j$-th target, with the upper bound $\eta_{ij}$ and the lower bound $\theta_{ij}$. 
The production plan $u_{i}$ represents the total mass produced by the $i$-th source, with the upper bound $\overline{u}_i$ and the lower bound $\hat{u}_{i}$. 
The first two equality constraints guarantee that the mass produced by each source are entirely shipped out, and the needs of each target are met exactly. 
In the OPT model, both the transport plan $\gamma_{ij}$ and the production plan $u_i$ are variables.
For convenience, assume that the feasible region of problem \cref{OPT_d0} is non-empty.
It is also worth noting that our discussion is general for any $N_1$ and $N_2$, and thus we set $N_1 = N_2 = N$ in the rest of the paper for the sake of simplicity.

The discrete OPT model \cref{OPT_d0} can be equivalently converted to the following simplified OPT model
\begin{equation}\label{OPT_obj}
\begin{gathered}
    \min_{\bm{\gamma},\bm{u}} \quad \sum_{i=1}^{N}\sum_{j=1}^{N} \widetilde{C}_{ij}(\gamma_{ij}-\theta_{ij}), \\
\begin{aligned}
    \mathrm{s.t.} & \quad \sum_{j=1}^{N} (\gamma_{ij}-\theta_{ij})=u_i-\sum_{j=1}^{N} \theta_{ij}, \ \sum_{i=1}^{N} (\gamma_{ij}-\theta_{ij})=v_j-\sum_{i=1}^{N} \theta_{ij}, \\ 
    & \quad 0\leq \gamma_{ij}-\theta_{ij}\leq \eta_{ij}-\theta_{ij}, \ \hat{u}_i-\sum_{j=1}^{N} \theta_{ij} \leq u_i-\sum_{j=1}^{N} \theta_{ij} \leq \overline{u}_i-\sum_{j=1}^{N} \theta_{ij}. 
\end{aligned}
\end{gathered}
\end{equation}
where $\widetilde{C}_{ij}= C_{ij}+p_i$.
The equivalence between \cref{OPT_d0} and \cref{OPT_obj} is ensured by the following equation
\begin{multline*}
    \sum_{i=1}^{N}\sum_{j=1}^{N} C_{ij}\gamma_{ij}+\sum_{i=1}^{N}p_i u_i
    = \sum_{i=1}^{N}\sum_{j=1}^{N} C_{ij}\gamma_{ij}+\sum_{i=1}^{N}p_i \sum_{j=1}^{N}\gamma_{ij} \\
    = \sum_{i=1}^{N}\sum_{j=1}^{N} \left( C_{ij}+p_i \right) \gamma_{ij}
    = \sum_{i=1}^{N}\sum_{j=1}^{N} \widetilde{C}_{ij} \gamma_{ij}, 
\end{multline*}
The above transformation absorbs the additional term of the variable marginal in the objective function, resulting in the same objective function as the classical OT.

Then, the discrete OPT model~\cref{OPT_obj} can be reformulated as
\begin{equation}\label{eq:theta}
\begin{gathered}
    \min_{\bm{\gamma},\bm{u}} \quad \sum_{i=1}^{N}\sum_{j=1}^{N} \widetilde{C}_{ij}\widetilde{\gamma}_{ij}, \\
\begin{aligned}
    \mathrm{s.t.} & \quad \sum_{j=1}^{N} \widetilde{\gamma}_{ij}=\widetilde{u}_i, \ \sum_{i=1}^{N} \widetilde{\gamma}_{ij}=\widetilde{v}_j, \
 0\leq \widetilde{\gamma}_{ij}\leq \widetilde{\eta}_{ij}, \ \widetilde{\hat{u}}_i \leq \widetilde{u}_i \leq \widetilde{\overline{u}}_i, \\
 &\quad  \widetilde{\gamma}_{ij}=\gamma_{ij}-\theta_{ij},\ \widetilde{u}_{i}=u_i-\sum_{j=1}^{N} \theta_{ij}, \ \widetilde{v}_{j}=v_j-\sum_{i=1}^{N} \theta_{ij},\\
 &\quad  \widetilde{\eta}_{ij}=\eta_{ij}-\theta_{ij},\
 \widetilde{\hat{u}}_{i}=\hat{u}_i-\sum_{j=1}^{N} \theta_{ij}, \ \widetilde{\overline{u}}_{i}=\overline{u}_i-\sum_{j=1}^{N} \theta_{ij}.
\end{aligned}
\end{gathered}
\end{equation}

For convenience, `` $\widetilde{\;\;}$ " is dropped in the following context.
Based on the above analysis, we consider the following simplified OPT model 
 \begin{equation}\label{OPT_d}
     \begin{gathered}
     \min_{\bm{\gamma,u}} \quad \sum_{i=1}^{N}\sum_{j=1}^{N} C_{ij} \gamma_{ij} \\
     \mathrm{s.t.} \quad \sum_{j=1}^{N} \gamma_{ij} = u_i,  \ \sum_{i=1}^{N} \gamma_{ij} = v_j, \ 
        0 \leq \gamma_{ij} \leq \eta_{ij}, \ \hat{u}_i \leq u_i \leq \overline{u}_i.  
     \end{gathered}
 \end{equation}

Compared to the classical OT, the main difference in \cref{OPT_d} is the additional inequality constraints on variables $\bm{u}$, which are coupled with variables $\bm{\gamma}$. 
When directly applying the Sinkhorn algorithm for this model, extra variables $\bm{u}$ can not be updated since only variables concerning transport plan $\bm{\gamma}$ are involved in the alternating scheme.
Thus, we consider the introduction of the relaxation variables $\bm{z},\bm{w}\in \mathbb{R}_{+}^{N}$ into \cref{OPT_d}, where $\mathbb{R_+}$ is the set of positive real numbers. That is
\begin{equation}\label{eq-relax}
\begin{gathered}
        \min_{\bm{\gamma}, \bm{z},\bm{w}} \quad \sum_{i=1}^{N}\sum_{j=1}^{N} C_{ij} \gamma_{ij} \\
        \begin{aligned}
        \text{s.t.} & \quad \sum_{j=1}^{N} \gamma_{ij} + z_i = \overline{u}_i, \ \sum_{j=1}^{N} \gamma_{ij} - w_i = \hat{u}_i, \\
        & \quad  \sum_{i=1}^N \gamma_{ij} = v_j, \
        0 \leq \gamma_{ij} \leq \eta_{ij}, \ z_i \geq 0, \ w_i \geq 0. 
        \end{aligned}
\end{gathered}
\end{equation}
There are multiple relaxation variables in \cref{eq-relax},
yet it is still equivalent to the OPT model \cref{OPT_d}.
Thus we call it the Multiple Relaxation Optimal Production Transport~(MR-OPT) model.
Note that the MR-OPT model only has equality constraints with additional non-negative variables $\bm{z}$ and $\bm{w}$, which enables us to design efficient numerical algorithms that update these variables alternately \cite{nocedal1999numerical}.

To ensure that the inequality constraints in \cref{eq-relax} hold naturally, we introduce multiple regularization terms into the original model, and the regularized model is
\begin{equation}\label{MRM}
\begin{aligned}
& \mathtoolsset{multlined-width=0.9\displaywidth}
\begin{multlined}
    \min_{\bm{\gamma}, \bm{z}, \bm{w}} \quad \sum_{i=1}^{N}\sum_{j=1}^{N} \left(C_{ij}\gamma_{ij}+\varepsilon \gamma_{ij} \ln(\gamma_{ij})+\varepsilon (\eta_{ij}-\gamma_{ij}) \ln(\eta_{ij}-\gamma_{ij})\right) \\
    +\sum_{i=1}^{N} \varepsilon z_i (\ln z_i-1)+\sum_{i=1}^{N} \varepsilon w_i (\ln w_i-1)
\end{multlined}
\\
& \text{   s.t.} \quad \sum_{j=1}^{N} \gamma_{ij} + z_i = \overline{u}_i, \ \sum_{j=1}^{N} \gamma_{ij} - w_i = \hat{u}_i, \ \sum_{i=1}^N \gamma_{ij} = v_j,
\end{aligned}
\end{equation}
where $\varepsilon$ is the regularization parameter. 
This is inspired by the double regularized problem proposed in \cite{wu2022double}.
Note that the regularized MR-OPT model is strictly convex, and the feasible set is only formed by several equality constraints. This facilitates the design of numerical algorithms in which each variable is alternately updated by optimizing a convex problem. 
Despite multiple regularization terms in the regularized MR-OPT model \cref{MRM}, we notice that the convergence of \cref{MRM} w.r.t. $\varepsilon$ still can be proved. That is, the regularized MR-OPT model converges to the original model \cref{OPT_d} as $\varepsilon$ goes to $0$. The theorem and its proof are as follows.

\begin{theorem}
	The optimal solution of the regularized MR-OPT model \cref{MRM} is unique. Let ($\bm{\gamma}_{\varepsilon}, \bm{z}_{\varepsilon}, \bm{w}_{\varepsilon}$) be the optimal solution of model \cref{MRM}. There exists one of the optimal solutions of the OPT model \cref{OPT_d} denoted by ($\bm{\gamma}^*,\bm{ u}^*$) such that $\bm{\gamma}_{\varepsilon}$ converges to $\bm{\gamma}^*$, $\bm{z}_{\varepsilon}$ converges to $\bm{\overline{u}}-\bm{u}^*$ and $\bm{w}_{\varepsilon}$ converges to $\bm{u}^*-\bm{\hat{u}}$ as $\varepsilon\rightarrow 0$. 
\end{theorem}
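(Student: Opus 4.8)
The plan is to treat the statement in two stages: first establish existence and uniqueness of the regularized minimizer by a compactness-plus-strict-convexity argument, and then obtain the limit as $\varepsilon \to 0$ through a $\Gamma$-convergence-type comparison that also identifies the limit uniquely as the optimal solution of \cref{OPT_d} minimizing the regularizing entropy.

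For uniqueness, I would first note that the objective of \cref{MRM} is finite precisely on the closed box $\mathcal{D} = \{ 0 \le \gamma_{ij} \le \eta_{ij},\ z_i \ge 0,\ w_i \ge 0 \}$, adopting the convention $0\ln 0 = 0$, and that it extends continuously to this box. Intersecting $\mathcal{D}$ with the three affine constraints of \cref{MRM} yields a compact, nonempty set, so a minimizer exists by the Weierstrass theorem. The objective is separable, and each summand $\gamma_{ij}\ln(\gamma_{ij}) + (\eta_{ij}-\gamma_{ij})\ln(\eta_{ij}-\gamma_{ij})$, $z_i(\ln z_i - 1)$, $w_i(\ln w_i - 1)$ is strictly convex in its own variable; a separable sum of coordinatewise strictly convex functions is strictly convex, and the linear part $C_{ij}\gamma_{ij}$ preserves this. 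Strict convexity over a convex feasible set forces the minimizer to be unique, which gives the first assertion.

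For the convergence, denote by $\mathcal{L}(\bm{\gamma}) = \sum_{ij} C_{ij}\gamma_{ij}$ the original cost and by $H(\bm{\gamma},\bm{z},\bm{w})$ the sum of all entropy terms, so that the regularized objective is $\mathcal{L} + \varepsilon H$. Since $H$ is continuous on the compact feasible set, it is bounded there, say $|H| \le M$. Fixing any optimal solution $(\bm{\gamma}^*, \bm{u}^*)$ of \cref{OPT_d}, I lift it to $(\bm{\gamma}^*, \bm{z}^*, \bm{w}^*)$ with $z_i^* = \overline{u}_i - u_i^*$ and $w_i^* = u_i^* - \hat{u}_i$, which is feasible for \cref{MRM} with finite $H$. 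Conversely, each minimizer $(\bm{\gamma}_{\varepsilon}, \bm{z}_{\varepsilon}, \bm{w}_{\varepsilon})$ induces a feasible point of \cref{OPT_d} via $u_i := \sum_j \gamma_{\varepsilon,ij}$, because the constraints yield $\hat{u}_i \le u_i \le \overline{u}_i$; hence $\mathcal{L}(\bm{\gamma}_{\varepsilon})$ is bounded below by the optimal value of \cref{OPT_d}. The optimality inequality $\mathcal{L}(\bm{\gamma}_{\varepsilon}) + \varepsilon H(\bm{\gamma}_{\varepsilon},\bm{z}_{\varepsilon},\bm{w}_{\varepsilon}) \le \mathcal{L}(\bm{\gamma}^*) + \varepsilon H(\bm{\gamma}^*,\bm{z}^*,\bm{w}^*)$ then sandwiches $\mathcal{L}(\bm{\gamma}_{\varepsilon})$ between the optimal value and the optimal value plus $2\varepsilon M$. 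By compactness, any sequence $\varepsilon_k \to 0$ admits a subsequence along which $(\bm{\gamma}_{\varepsilon_k}, \bm{z}_{\varepsilon_k}, \bm{w}_{\varepsilon_k})$ converges to a feasible point whose $\gamma$-component attains the optimal value, and is therefore an optimal solution of \cref{OPT_d}.

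To upgrade subsequential convergence to full convergence and to pin down the limit, I would refine the comparison: dividing the optimality inequality by $\varepsilon$ and using $\mathcal{L}(\bm{\gamma}_{\varepsilon}) \ge \mathcal{L}(\bm{\gamma}')$ for every optimal $(\bm{\gamma}', \bm{u}')$ shows $H(\bm{\gamma}_{\varepsilon},\bm{z}_{\varepsilon},\bm{w}_{\varepsilon}) \le H(\bm{\gamma}',\bm{z}',\bm{w}')$. Passing to the limit along any convergent subsequence, the limit is an optimal solution of \cref{OPT_d} that minimizes $H$ over the convex set of optimal solutions. Since $H$ is strictly convex, this minimizer is unique; hence all subsequential limits coincide, yielding convergence of the full family to a single limit $(\bm{\gamma}^*, \bm{z}^*, \bm{w}^*)$. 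Setting $u_i^* = \sum_j \gamma_{ij}^*$, the constraint identities $z_{\varepsilon,i} = \overline{u}_i - \sum_j\gamma_{\varepsilon,ij}$ and $w_{\varepsilon,i} = \sum_j\gamma_{\varepsilon,ij} - \hat{u}_i$ pass to the limit to give $\bm{z}^* = \overline{\bm{u}} - \bm{u}^*$ and $\bm{w}^* = \bm{u}^* - \hat{\bm{u}}$, as claimed. I expect the main obstacle to be exactly this last step: the naive estimate yields only subsequential limits, and promoting it to full convergence requires the entropy-selection argument together with the strict convexity of $H$ on the convex optimal face of \cref{OPT_d}.
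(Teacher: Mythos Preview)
Your proof is correct and follows the same backbone as the paper's: strict convexity of the entropy terms gives uniqueness, and the sandwich inequality comparing $\mathcal{L}(\bm{\gamma}_\varepsilon)+\varepsilon H(\cdots)$ against $\mathcal{L}(\bm{\gamma}^*)+\varepsilon H(\cdots)$ forces every cluster point of $\bm{\gamma}_\varepsilon$ to be optimal for \cref{OPT_d}. The one substantive difference is your final entropy-selection step. The paper's proof stops once it shows that any subsequential limit $\hat{\bm{\gamma}}$ is optimal; it never argues that different subsequences must yield the \emph{same} limit, so as written it only delivers subsequential convergence to the (possibly non-singleton) optimal face of \cref{OPT_d}. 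Your additional observation---that dividing by $\varepsilon$ gives $H(\bm{\gamma}_\varepsilon,\bm{z}_\varepsilon,\bm{w}_\varepsilon)\le H(\bm{\gamma}',\bm{z}',\bm{w}')$ for every optimal $(\bm{\gamma}',\bm{u}')$, hence any limit minimizes $H$ over that convex face, and this minimizer is unique by strict convexity---identifies a single limit and upgrades to full convergence of the family. So your route is essentially the paper's argument plus a genuine strengthening that the paper's own proof omits.
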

\begin{proof}
	Firstly, we prove that the optimal solution of model \cref{MRM} is unique. As we all know, $h_1(x)=x\ln x+(1-x)\ln (1-x)$ and  $h_2(x)=x(\ln x -1)$ are strictly convex functions, so the regularized MR-OPT model \cref{MRM} is still strictly convex and has a unique optimal solution. 

    Then we prove the convergence w.r.t. $\varepsilon$. With the fact
    \begin{equation*}
        \bm{\overline{u}}-\bm{u}^*=\bm{\overline{u}}-\bm{\gamma}^* \bm{1}_N, \ \bm{\gamma}_{\varepsilon} \bm{1}_N + \bm{z}_{\varepsilon} = \bm{\overline{u}}, 
    \end{equation*}
    where $\bm{1}_N$ denotes an $N$-size vector in which each element is $1$, $\bm{z}_{\varepsilon}$ converges to $\bm{\overline{u}}-\bm{u}^*$ if and only if $\bm{\gamma}_{\varepsilon}$ converges to $\bm{\gamma}^*$. 
    Similarly, $\bm{w}_{\varepsilon}$ converges to $\bm{u}^*-\bm{\hat{u}}$ if and only if $\bm{\gamma}_{\varepsilon}$ converges to $\bm{\gamma}^*$, so we only prove that $\bm{\gamma}_{\varepsilon}$ converges to $\bm{\gamma}^*$ in the following. 
     
 For any sequence $\{\varepsilon_k\}_{k=1}^{\infty}$, where $\varepsilon_k>0$ and $\lim\limits_{k\rightarrow \infty} \varepsilon_k=0$. Suppose the optimal solution of model \cref{MRM} with the regularization parameter $\varepsilon_k$ is $\bm{\gamma}_k$. 
 The feasible set of the MR-OPT model \cref{MRM} can be equivalently converted to 
 $$\Gamma' = \left\{\bm{\gamma}\in \mathbb{R}^{N\times N} \mid \bm{\hat{u}}\leq \bm{\gamma}\bm{1}_N \leq  \bm{\overline{u}}, \bm{\gamma}^{T} \bm{1}_N=\bm{v}, \bm{0} \leq \bm{\gamma} \leq \bm{\eta} \right\}, $$
 where $\bm{A} \leq \bm{B}$ denotes that every element in the matrix (or vector) $\bm{A}$ is not larger than the corresponding element in the matrix (or vector) $\bm{B}$. 
 Consider that $\Gamma'$ is closed and bounded, there exists a subsequence of $\bm{\gamma}_k$ converging to $\hat{\bm{\gamma}}\in \Gamma'$. For the sake of simplicity, we still use the same symbol $\bm{\gamma}_k$ to represent the subsequence.
 
 Thus, $\bm{\gamma}_{k}, \bm{\gamma}^*$, and $\hat{\bm{\gamma}}$ are feasible solutions of the OPT model \cref{OPT_d} and the MR-OPT model \cref{MRM}. Because $\bm{\gamma}^*$ is the optimal solution of  \cref{OPT_d}, for any $\bm{\gamma}_k$, $\langle \bm{C}, \bm{\gamma}^*\rangle \leq \langle \bm{C}, \bm{\gamma}_k \rangle$ holds. Here, $\langle \bm{A}, \bm{B} \rangle = \sum_i \sum_j a_{ij} b_{ij}$. 
 Because $\bm{\gamma}_k$ is the optimal solution of  \cref{MRM} with $\varepsilon=\varepsilon_k$, 
 \begin{multline*}
     \langle \bm{C}, \bm{\gamma}_k\rangle \! +\! \varepsilon_k(\langle \bm{\gamma}_k, \ln \bm{\gamma}_k \rangle\! +\!  \langle \bm{\eta}-\bm{\gamma}_k, \ln (\bm{\eta}-\bm{\gamma}_k) \rangle  
    \! +\! \langle \bm{z}_k, \ln \bm{z}_k -\bm{1}_N \rangle\! +\!  \langle \bm{w}_k, \ln \bm{w}_k -\bm{1}_N \rangle) \\
     \leq \langle \bm{C}, \bm{\gamma}^* \rangle\!  +\! \varepsilon_k(\langle \bm{\gamma}^*, \ln \bm{\gamma}^* \rangle\! +\! \langle \bm{\eta}-\bm{\gamma}^*, \ln (\bm{\eta}-\bm{\gamma}^*) \rangle  
     \! +\! \langle \bm{z}^*, \ln \bm{z}^* -\bm{1}_N \rangle\! +\! \langle \bm{w}^*, \ln \bm{w}^* -\bm{1}_N \rangle)
 \end{multline*}
 holds, where $\bm{z}^*=\bm{\overline{u}}-\bm{\gamma}^*\bm{1}_N, \bm{w}^*=\bm{\gamma}^* \bm{1}_N-\bm{\hat{u}}$.
 So we have
 \begin{equation*}
     \begin{aligned}
      0 & \leq \langle \bm{C}, \bm{\gamma}_k\rangle - \langle \bm{C}, \bm{\gamma}^* \rangle\\
     &
     \mathtoolsset{multlined-width=0.95\displaywidth}
     \begin{multlined}
         \leq \varepsilon_k \left(\langle \bm{\gamma}^*, \ln \bm{\gamma}^* \rangle+ \langle \bm{\eta}-\bm{\gamma}^*, \ln (\bm{\eta}-\bm{\gamma}^*) \rangle  + \langle \bm{z}^*, \ln \bm{z}^* -\bm{1}_N \rangle+ \langle \bm{w}^*, \ln \bm{w}^* -\bm{1}_N\rangle \right. \\
     \left. - \langle \bm{\gamma}_k, \ln \bm{\gamma}_k \rangle- \langle \bm{\eta}-\bm{\gamma}_k, \ln (\bm{\eta}-\bm{\gamma}_k) \rangle  -\langle \bm{z}_k, \ln \bm{z}_k -\bm{1}_N \rangle- \langle \bm{w}_k, \ln \bm{w}_k -\bm{1}_N \rangle  \right).
     \end{multlined}
     \end{aligned}
 \end{equation*}
 Note that the inner product function and logarithmic function are continuous and the definition domain is closed and bounded, so each term in the right-hand equation is bounded with different $k$. 
 Now let the limit $k\rightarrow \infty$, then $\varepsilon_k\rightarrow 0$ and consequently $\langle \bm{C}, \bm{\gamma}_k\rangle - \langle \bm{C}, \bm{\gamma}^* \rangle \leq 0$ holds, which means that $\langle \bm{C}, \bm{\gamma}_k\rangle - \langle \bm{C}, \bm{\gamma}^* \rangle = 0$. Hence, $\hat{\bm{\gamma}}$ is the optimal solution of the OPT model \cref{OPT_d}, and the theorem holds. 
\end{proof}

\section{Generalized Alternating Sinkhorn Algorithm}\label{MRM-alg}
To solve the regularized MR-OPT model \cref{MRM}, we analyze the Lagrangian dual and design the Generalized Alternating Sinkhorn-I~(GAS-I) algorithm updating the dual variables alternately, which is described in detail in \cref{subsec:GAS-I}.
We then note a conserved total mass within the MR-OPT model \cref{eq-relax}, specified by the condition $\bm{1}_N^T\bm{\gamma}\bm{1}_N=\bm{1}_N^T\bm{v}$. 
Taking this condition into account, the regularized MR-OPT model is extended with two additional constraints on the slack variables, and the Generalized Alternating Sinkhorn-II~(GAS-II) algorithm is presented in \cref{subsec:GAS-II} following this extension.
This extension ensures that the total mass remains unchanged during the alternating iterations, which makes it possible to analyze the upper and lower bounds of the dual variables in each iteration. 
Consequently, a theoretical analysis regarding the convergence of the GAS-II algorithm and the number of iterations required for convergence is provided in \cref{subsec:convergence}.

\subsection{GAS-I Algorithm}\label{subsec:GAS-I}
By introducing Lagrange multipliers $\bm{\alpha}\in \mathbb{R}^N,\bm{\beta}\in \mathbb{R}^N,\bm{\lambda}\in \mathbb{R}^N$, the Lagrangian dual of the regularized MR-OPT model \cref{MRM} is
\begin{multline}\label{Langrange}
    L(\bm{\gamma},\bm{z},\bm{w},\bm{\alpha},\bm{\beta},\bm{\lambda}) = \sum_{i=1}^{N}\sum_{j=1}^{N} (C_{ij}\gamma_{ij}+\varepsilon \gamma_{ij} \ln(\gamma_{ij})+\varepsilon (\eta_{ij}-\gamma_{ij}) \ln(\eta_{ij}-\gamma_{ij})) \\
	 +\sum_{i=1}^{N} \varepsilon z_i \left(\ln z_i-1 \right)+
		\sum_{i=1}^{N} \varepsilon w_i (\ln w_i-1)
        +\sum_{j=1}^{N}\lambda_j\left(\sum_{i=1}^{N}\gamma_{ij}-v_j\right) \\
         +\sum_{i=1}^{N}\beta_i\left(\sum_{j=1}^{N}\gamma_{ij}-w_i-\hat{u}_i\right) + \sum_{i=1}^{N}\alpha_i\left(\sum_{j=1}^{N}\gamma_{ij}+z_i-\overline{u}_i\right). 
\end{multline}
According to the first-order KKT condition, 
we obtain the following functions with auxiliary variables $\bm{\rho}\in \mathbb{R}^{N\times N},\bm{\phi}\in \mathbb{R}^N,\bm{\xi}\in \mathbb{R}^N$, and $\bm{\psi}\in \mathbb{R}^N$
\begin{equation}\label{Langrange-1}
	\begin{aligned}
		\rho_{ij}=\phi_i \xi_i K_{ij} \psi_j, \
		\gamma_{ij}=\frac{\rho_{ij}\eta_{ij}}{1+\rho_{ij}},\
		z_i=\phi_i,\
		w_i=\xi_i^{-1}, 
	\end{aligned}
\end{equation}	
where 
\begin{equation*}
    \phi_i=e^{-\alpha_i / \varepsilon},\ \xi_i=e^{-\beta_i / \varepsilon}, \ K_{ij}=e^{- C_{ij} / \varepsilon}, \ \psi_j=e^{- \lambda_j / \varepsilon}. 
\end{equation*}

Substituting \cref{Langrange-1} into the Lagrangian in
 \cref{Langrange} and setting $\dfrac{\partial L}{\partial \alpha_i}=0$, $\dfrac{\partial L}{\partial \beta_i}=0$, and $\dfrac{\partial L}{\partial \lambda_j}=0$, we have
$3N$ single-variable equations
\begin{align}
    f_i(\phi_i) &\triangleq \sum_{j=1}^{N}\frac{\eta_{ij}}{1+\phi_i \xi_i K_{ij} \psi_j}-\sum_{j=1}^{N}\eta_{ij}-\phi_i +\overline{u}_i =0, \ i=1,2,\cdots,N, \label{eq-phi} \\
    g_i(\xi_i) &\triangleq \sum_{j=1}^{N}\frac{\eta_{ij}}{1+\phi_i \xi_i K_{ij} \psi_j}-\sum_{j=1}^{N}\eta_{ij}+\xi_i^{-1} +\hat{u}_i =0, \ i=1,2,\cdots,N, \label{eq-xi} \\
    h_j(\psi_j) &\triangleq \sum_{i=1}^{N}\frac{\eta_{ij}}{1+\phi_i \xi_i K_{ij} \psi_j}-\sum_{i=1}^{N}\eta_{ij}+v_j =0, \  j=1,2,\cdots,N, \label{eq-psi}
\end{align}
whose solutions 
are the optimal variables $\bm{\phi}$, $\bm{\xi}$ and $\bm{\psi}$ in the Lagrangian dual. 
Thus, the optimal solution $\bm{\gamma}$, $\bm{z}$, $\bm{w}$ of the regularized MR-OPT model \cref{MRM} can be obtained by finding the
solutions of equations \cref{eq-phi,eq-xi,eq-psi}.

Consider auxiliary variables
\begin{equation*}
    \varsigma_i = (\overline{u}_i - \hat{u}_i ) \xi_i - 1,  
\end{equation*}
and we find that
\begin{equation*}
    \phi_i=\frac{\varsigma_i(\overline{u}_i-\hat{u_i})}{\varsigma_i+1}, \  \xi_i=\frac{\varsigma_i+1}{\overline{u}_i-\hat{u_i}}, \  \rho_{ij}=\varsigma_i K_{ij} \psi_j, 
\end{equation*}
Therefore, solving 
\cref{eq-phi,eq-xi,eq-psi} is equivalent to 
solving 
the following equations
\begin{align}
    \hat{f}_i(\varsigma_i) &\triangleq  \sum_{j=1}^{N}\frac{\eta_{ij}}{1+\varsigma_i K_{ij} \psi_j}-\sum_{j=1}^{N} \eta_{ij}+\frac{\overline{u_i}-\hat{u_i}}{\varsigma_i+1} +\hat{u}_i =0, \ i=1,2,\cdots,N, \label{eq-x} \\
    \hat{h}_j(\psi_j) &\triangleq  \sum_{i=1}^{N}\frac{\eta_{ij}}{1+\varsigma_i K_{ij} \psi_j}- \sum_{i=1}^{N} \eta_{ij}+v_j =0, \ j=1,2,\cdots,N. \label{eq-psi2}
\end{align}
With the elimination of redundant variables $\bm{\varsigma}$, the regularized MR-OPT model \cref{MRM} can be calculated by only solving $2N$ single-variable equations.

One can find that
\begin{equation*}
\begin{aligned}
	&\hat{f}_i'(\varsigma_i)= -\sum_{j=1}^{N}\frac{\eta_{ij}K_{ij} \psi_j}{(1+\varsigma_i K_{ij} \psi_j)^2}-\frac{\overline{u_i}-\hat{u_i}}{(\varsigma_i+1)^2}<0, \\
    &\hat{h}_j'(\psi_j)=  -\sum_{j=1}^{N}\frac{\eta_{ij}\varsigma_i K_{ij}}{(1+\varsigma_i K_{ij} \psi_j)^2}<0, 
\end{aligned}
\end{equation*}
which means that these functions are monotonically decreasing.
Moreover,
\begin{equation*}
\begin{aligned}
	&\lim_{\varsigma_i\rightarrow 0^+}=\overline{u}_i>0, \ \lim_{\varsigma_i\rightarrow \infty}=-\sum_{j=1}^{N} \eta_{ij} +\hat{u}_i<0, \\
 &\lim_{\psi_j\rightarrow 0^+}=\overline{v}_i>0,\ \lim_{\psi_j\rightarrow \infty}=-\sum_{i=1}^{N} \eta_{ij} +v_j<0,  
\end{aligned}
\end{equation*}
which implies that each $\hat{f}_i$ (or $\hat{h}_j$) has a unique positive zero point. 
Given that
$\hat{f}_i''(\varsigma_i)> 0$ (or $\hat{h}_j''(\psi_j)>0$), each zero point can be obtained by Newton's method with global convergence \cite{kincaid2009numerical}.

In our proposed algorithm, the Lagrangian dual variables are updated alternately, which is similar to the Sinkhorn algorithm for the classical OT model. However, due to the extra constraints on the marginals and transport plan, unlike the matrix-vector multiplication in the Sinkhorn algorithm, each alternate iteration step is to solve several single-variable nonlinear equations. Thus it is named the Generalized Alternating Sinkhorn (GAS) algorithm. 
The pseudo-code of the Generalized Alternating Sinkhorn-I (GAS-I) algorithm is given in \cref{alg:MRM}. 

\begin{sloppypar}
\begin{remark}
    It is worth noting that in \cite{wu2023communication} and \cite{chen2023information}, 
    there exist Alternating Sinkhorn~(AS) and Generalized Alternating Sinkhorn~(GAS) algorithms for information theory problems interpreted into optimal transport. Both their and our algorithms leverage an alternating manner similar to the Sinkhorn algorithm, while much more extra constraints in our OPT model lead to nonlinear equations solving in each iteration step. 
\end{remark}
\end{sloppypar}
\renewcommand{\thealgorithm}{I}
\begin{algorithm}[t]
	\renewcommand{\algorithmicrequire}{\textbf{Input:}}
	\renewcommand{\algorithmicensure}{\textbf{Output:}}
		\caption{The GAS-I Algorithm for the OPT Model}
		\label{alg:MRM}
            \begin{spacing}{1.2}
		\begin{algorithmic}[1]
			\REQUIRE $\bm{\hat{u}},\bm{\overline{u}},\bm{v},\bm{C},N,\varepsilon,\bm{\eta},\bm{K}, \delta,L$
			\ENSURE $\bm{\gamma}, \bm{z}, \bm{w}, W$
            \IF{$\sum_{j=1}^{N}\eta_{ij}<\hat{u}_i$ or $\overline{u}_i<\hat{u}_i$ or $\sum_{i=1}^{N}
            \eta_{ij}<v_j$}
            \STATE {\bf return} ``no feasible solution"
            \ENDIF
			\STATE $\bm{\varsigma}\leftarrow \bm{1}_N/N,\quad \bm{\psi}\leftarrow  \bm{1}_N/N,\quad l\leftarrow  1, \quad W\leftarrow 0$
			\WHILE {$l \leq L$}
			\STATE  Solve equations in \cref{eq-x} with Newton's method. ($i=1,2,\cdots ,N$)
			\STATE  Solve equations in \cref{eq-psi2} with Newton's method. ($j=1,2,\cdots ,N$)
			\STATE $l\leftarrow l+1$
			\ENDWHILE
			\FOR {$i$ from $1$ to $N$}
            \STATE $z_i\leftarrow \dfrac{\varsigma_i(\overline{u}_i-\hat{u}_i)}{\varsigma_i+1},\quad w_i \leftarrow  \dfrac{\overline{u}_i-\hat{u}_i}{\varsigma_i+1}$
			\FOR{$j$ from $1$ to $N$}
			\STATE $\gamma_{ij}\leftarrow \dfrac{\varsigma_i K_{ij} \psi_j\eta_{ij}}{1+\varsigma_i K_{ij} \psi_j}, \quad W\leftarrow  W+C_{ij}\gamma_{ij}$
			\ENDFOR
			\ENDFOR
            \RETURN $\bm{\gamma}, \bm{z}, \bm{w}, W$
		\end{algorithmic}
            \end{spacing}
\end{algorithm}

The computation for each $\hat{f}_i$ (or $\hat{h}_j$) requires $O(N)$ time because there are $N$ terms in each equation. Consequently, the time complexity of GAS-I is $O(N^2)$, which is the same as the Sinkhorn algorithm for the classical OT.  
Considering extra constraints on marginals and transport plan in the OPT model, the same complexity as the Sinkhorn algorithm is favorable.

In addition, the log-domain stabilization technique \cite{chizat2018scaling}
is useful for our GAS-I in addressing the severe numerical issues caused by small $\varepsilon$. 
The idea is that when the maximum entry of $\bm{\varsigma}$ or $\bm{\psi}$ exceeds a given threshold $\tau$, these two vectors will be normalized to $\bm{1}_N$ and the excessive part will be absorbed in $\bm{\alpha}$ and $\bm{\beta}$
\begin{align}
    &\bm{\alpha} \leftarrow \bm{\alpha} +\varepsilon \ln(\bm{\varsigma}), \ \bm{\beta} \leftarrow   \bm{\beta} +\varepsilon \ln(\bm{\psi}), \label{log-domain1} \\
    &\bm{K} \leftarrow \operatorname{diag}\left(e^{\bm{\alpha} / \varepsilon}\right) \bm{K} \operatorname{diag}\left(e^{\bm{\beta} / \varepsilon}\right), \
     \bm{\varsigma}\leftarrow \bm{1}_N, \ \bm{\psi}\leftarrow  \bm{1}_N.\label{log-domain2}
\end{align}
After this, we update $\bm{\hat{f}}$ and $\bm{\hat{h}}$ as follows 
\begin{align}
    &\hat{f}_i(\varsigma_i) \triangleq \sum_{j=1}^{N}\frac{\eta_{ij}}{1+\varsigma_i e^{-\alpha_i / \varepsilon} K_{ij} \psi_j}-\sum_{j=1}^{N} \eta_{ij}+\frac{\overline{u}_i-\hat{u}_i}{\varsigma_i+1} +\hat{u}_i=0,\ i=1,2,\cdots ,N,\label{eq-log1} \\
    &\hat{h}_j(\psi_j) \triangleq \sum_{i=1}^{N}\frac{\eta_{ij}}{1+\varsigma_i K_{ij} \psi_j e^{-\beta_j / \varepsilon}} - \sum_{i=1}^{N} \eta_{ij}+v_j=0, \ j=1,2,\cdots ,N.\label{eq-log2} 
\end{align}
We just need to add the steps \cref{log-domain1,log-domain2} after line 8 in \cref{alg:MRM} and use the equations \cref{eq-log1,eq-log2} to replace the equations \cref{eq-x,eq-psi2} in lines 5-6. 
It is worth emphasizing that the utilization of the log-domain stabilization technique does not impact the time complexity of \cref{alg:MRM}.

\subsection{GAS-II Algorithm}\label{subsec:GAS-II}
Note that in the MR-OPT model~\cref{eq-relax}, the total mass $\bm{1}_N^T\bm{\gamma}\bm{1}_N=\bm{1}_N^T\bm{v}$ is fixed, which implies that the following two constraints on the slack variables $\bm{z}$ and $\bm{w}$
\begin{align*}
    \sum_{i=1}^N z_{i} = k_1,\  \sum_{i=1}^N w_{i} = k_2,
\end{align*}
are always hold, where
\begin{align*}
    k_1=\sum_{i=1}^N \overline{u}_i - \sum_{i=1}^{N}\sum_{j=1}^{N} \gamma_{ij},\ k_2= \sum_{i=1}^{N}\sum_{j=1}^{N} \gamma_{ij}-\sum_{i=1}^N \hat{u}_i.
\end{align*}
Then we add these two constraints on the slack variables to the regularized MR-OPT model \cref{MRM}
\begin{equation}\label{con_MRM}
\begin{aligned}
& \mathtoolsset{multlined-width=0.8\displaywidth}
\begin{multlined}
    \min_{\bm{\gamma}, \bm{z}, \bm{w}} \quad \sum_{i=1}^{N}\sum_{j=1}^{N} \left(C_{ij}\gamma_{ij}+\varepsilon \gamma_{ij} \ln(\gamma_{ij})+\varepsilon (\eta_{ij}-\gamma_{ij}) \ln(\eta_{ij}-\gamma_{ij})\right) \\
    +\sum_{i=1}^{N} \varepsilon z_i (\ln z_i-1)+\sum_{i=1}^{N} \varepsilon w_i (\ln w_i-1)
\end{multlined}
\\
& \text{s.t.} \quad \sum_{j=1}^{N} \gamma_{ij} + z_i = \overline{u}_i, \ \sum_{j=1}^{N} \gamma_{ij} - w_i = \hat{u}_i, \ \sum_{i=1}^N \gamma_{ij} = v_j, \ \sum_{i=1}^N z_{i} = k_1,\  \sum_{i=1}^N w_{i} = k_2.
\end{aligned}
\end{equation}

The Lagrangian function $\hat{L}(\bm{\gamma},\bm{z},\bm{w},\bm{\alpha},\bm{\beta},\bm{\lambda},a, b)$ is defined as 
\begin{equation}\label{con_Langrange}
    L(\bm{\gamma},\bm{z},\bm{w},\bm{\alpha},\bm{\beta},\bm{\lambda})
    +a\left(\sum_{i=1}^{N} z_i-k_1\right)+b\left(\sum_{i=1}^{N} w_i-k_2\right), 
\end{equation}
and $L(\bm{\gamma},\bm{z},\bm{w},\bm{\alpha},\bm{\beta},\bm{\lambda})$ is defined in \cref{Langrange}.

Same as the above analysis for GAS-I, finding the solutions of \cref{eq-log1,eq-log2} is equivalent to finding the solutions of equations \cref{eq:s,eq:psiT,eq:TT} in a three-step alternate iterative process
\begin{align}
    \overline{f}_i(s_i) &\triangleq \sum_{j=1}^{N}\frac{\eta_{ij}}{1+s_i K_{ij} \psi_j}-\sum_{j=1}^{N} \eta_{ij}+\frac{\overline{u_i}-\hat{u_i}}{T s_i+1} +\hat{u}_i=0,\ i=1,2,\cdots ,N,\label{eq:s}\\
    \overline{h}_j(\psi_j) &\triangleq \sum_{i=1}^{N}\frac{\eta_{ij}}{1+s_i K_{ij} \psi_j}- \sum_{i=1}^{N} \eta_{ij}+v_j=0,\ j=1,2,\cdots ,N,\label{eq:psiT}\\
    \overline{h}_{N+1}(T) &\triangleq \sum_{i=1}^{N} \frac{\overline{u_i}-\hat{u_i}}{T s_i+1} -k_2=0.\label{eq:TT}
\end{align}

We denote it as the Generalized Alternating Sinkhorn-II~(GAS-II) algorithm. and the pseudo-code is presented in \cref{alg:MRM II}. 

\renewcommand{\thealgorithm}{II}
\begin{algorithm}[t]
	\renewcommand{\algorithmicrequire}{\textbf{Input:}}
	\renewcommand{\algorithmicensure}{\textbf{Output:}}
            \caption{The GAS-II Algorithm for the OPT Model}
		\label{alg:MRM II}
            \begin{spacing}{1.2}
		\begin{algorithmic}[1]
			\REQUIRE $\bm{\hat{u}},\bm{\overline{u}},\bm{v},\bm{C},N,\varepsilon,\bm{\eta},\bm{K}, \delta,L$
			\ENSURE $\bm{\gamma}, \bm{z}, \bm{w}, W$
            \IF{$\sum_{j=1}^{N}\eta_{ij}<\hat{u}_i$ or $\overline{u}_i<\hat{u}_i$ or $\sum_{i=1}^{N}
            \eta_{ij}<v_j$}
            \STATE {\bf return} ``no feasible solution"
            \ENDIF
			\STATE $\bm{s}\leftarrow \bm{1}_N/N,\quad \bm{\psi}\leftarrow  \bm{1}_N/N,\quad T\leftarrow 1, \quad l\leftarrow  1, \quad W\leftarrow 0$
			\WHILE {$l \leq L$}
			\STATE  Solve equations \cref{eq:s} with Newton's method. ($i=1,2,\cdots ,N$)
			\STATE  Solve equations \cref{eq:psiT} with Newton's method. ($j=1,2,\cdots ,N$)
            \STATE  Solve equation \cref{eq:TT} with Newton's method. 
			\STATE $l\leftarrow l+1$
			\ENDWHILE
			\FOR {$i$ from $1$ to $N$}
            \STATE $z_i\leftarrow \dfrac{Ts_i(\overline{u}_i-\hat{u}_i)}{Ts_i+1},\quad w_i \leftarrow  \dfrac{\overline{u}_i-\hat{u}_i}{Ts_i+1}$
			\FOR{$j$ from $1$ to $N$}
			\STATE $\gamma_{ij}\leftarrow \dfrac{s_i K_{ij} \psi_j\eta_{ij}}{1+s_i K_{ij} \psi_j}, \quad W\leftarrow  W+C_{ij}\gamma_{ij}$
			\ENDFOR
			\ENDFOR
            \RETURN $\bm{\gamma}, \bm{z}, \bm{w}, W$
		\end{algorithmic}
            \end{spacing}
\end{algorithm}

Obviously the GAS-I~(Algorithm \ref{alg:MRM}) can be regarded as the GAS-II~(Algorithm \ref{alg:MRM II}) with $T$ set to $1$ in each iteration.
Comparing Algorithm I and Algorithm II, the main difference is the calculation of an additional variable $T$, as indicated in line 7 of Algorithm II.
With merely an additional variable $T$,  the total mass $\bm{1}_N^T\bm{\gamma}\bm{1}_N=\bm{1}_N^T\bm{v}$ is fixed in each iteration, thereby enabling us to complete the convergence proof in next section.
Finally, the time complexity of GAS-II is $O(N^2)$, which is the same as GAS-I.

\begin{remark}
The log-domain stabilization technique \cite{chizat2018scaling} is also useful for our GAS-II
in addressing the severe numerical issues caused by small $\varepsilon$. 
The utilization of the log-domain stabilization technique does not impact the time complexity of GAS-II algorithm. 
\end{remark}

\subsection{Convergence Analysis of GAS-II Algorithm}\label{subsec:convergence}
In this subsection, we prove the convergence and analyze the number of iterations of the proposed GAS-II algorithm inspired by previous works~\cite{wu2022double,villani2009optimal,peyre2019computational}. 
The main results are summarized as follows.

\begin{theorem}\label{thm:con}
Any limit point of the variables $\left(\bm{\alpha}^{(l)},\bm{\beta}^{(l)},\bm{\lambda}^{(l)},a^{(l)},b^{(l)}\right)$ in the iteration of the GAS-II is a coordinatewise maximum point of the dual problem
 \begin{multline}
\max_{\bm{\alpha},\bm{\beta},\bm{\lambda},a,b}  -\varepsilon \sum_{i=1}^{N}\sum_{j=1}^{N} \eta_{ij}\ln(1+\rho_{ij})-\varepsilon \sum_{i=1}^{N} t_1 \phi_{i}\\
  -\varepsilon \sum_{i=1}^{N} t_2 \frac{1}{\xi_{i}} - \sum_{i=1}^{N} \alpha_{i} \overline{u}_i- \sum_{i=1}^{N} \beta_{i} \hat{u}_i-\sum_{j=1}^{N}\lambda_jv_j -ak_1 -b k_2,
\end{multline}
where 
$t_1=e^{-\frac{a}{\varepsilon}}, \  t_2=e^{-\frac{b}{\varepsilon}}.$
\end{theorem}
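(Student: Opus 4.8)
The plan is to read the GAS-II iteration as an \emph{exact} block coordinate ascent on the concave dual objective $D(\bm{\alpha},\bm{\beta},\bm{\lambda},a,b)$ displayed in the statement, and then to run the classical ``monotone, bounded, continuous'' argument for such schemes. First I would make the dictionary between the algorithmic variables and the dual variables precise. Differentiating the constrained Lagrangian \cref{con_Langrange} in $z_i,w_i$ forces $z_i=\phi_i t_1$ and $w_i=\xi_i^{-1}t_2$, and together with $\rho_{ij}=\phi_i\xi_i K_{ij}\psi_j$ from \cref{Langrange-1} one checks that the algorithmic $s_i,\psi_j,T$ encode $\alpha_i+\beta_i$, $\lambda_j$, and $b-a$ respectively, with $\rho_{ij}=s_i K_{ij}\psi_j$ and $z_i/w_i=Ts_i$. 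The purpose of this bookkeeping is to show that the three update families \cref{eq:s,eq:psiT,eq:TT} are precisely the first-order stationarity equations $\partial D/\partial(\cdot)=0$ for the corresponding coordinate blocks of $D$.

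Next I would verify that each update performs an exact coordinatewise maximization. For a fixed block, $D$ is strictly concave in that coordinate, which is equivalent to $\overline{f}_i,\overline{h}_j,\overline{h}_{N+1}$ being strictly decreasing (exactly as was shown for $\hat{f}_i,\hat{h}_j$ in \cref{subsec:GAS-I}), and the single-variable equation has a \emph{unique} root, guaranteed by the sign of the limits at $0^+$ and $\infty$ together with the convexity justifying Newton's method. Hence each of \cref{eq:s,eq:psiT,eq:TT} returns the unique coordinatewise maximizer, so $D$ is nondecreasing along the iteration. Since $D$ is bounded above by weak duality against any primal-feasible point of \cref{con_MRM}, the sequence $D^{(l)}$ converges and the per-step increments tend to $0$.

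The decisive step is boundedness of the iterates $\left(\bm{\alpha}^{(l)},\bm{\beta}^{(l)},\bm{\lambda}^{(l)},a^{(l)},b^{(l)}\right)$, and this is exactly where the two additional constraints of \cref{con_MRM} enter. In the classical Sinkhorn dual there is a gauge freedom that lets the multipliers drift to infinity; fixing the total mass $\bm{1}_N^T\bm{\gamma}\bm{1}_N=\bm{1}_N^T\bm{v}$ through the variable $T$ is what makes the per-iteration bounds available, as noted in \cref{subsec:GAS-II}. Concretely, I would use the feasibility relations $0\le\gamma_{ij}\le\eta_{ij}$, $\hat{u}_i\le\sum_j\gamma_{ij}\le\overline{u}_i$, $\sum_i\gamma_{ij}=v_j$, and $z_i+w_i=\overline{u}_i-\hat{u}_i$ that hold at every iterate to derive explicit two-sided bounds on $s_i$, $\psi_j$, and $T$, and then transfer them to $\bm{\alpha},\bm{\beta},\bm{\lambda},a,b$ through the logarithmic relations $s_i=e^{-(\alpha_i+\beta_i)/\varepsilon}$, $\psi_j=e^{-\lambda_j/\varepsilon}$, $T=e^{(b-a)/\varepsilon}$. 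I expect this estimate to be the main obstacle, since it requires controlling each dual coordinate quantitatively from the mass balance and the capacity bounds simultaneously, modulo the residual scalar gauge that leaves $\rho_{ij}$ and $Ts_i$ invariant.

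Finally I would close by compactness and continuity, mirroring \cite{wu2022double,peyre2019computational}. By boundedness, Bolzano--Weierstrass yields a convergent subsequence with limit $(\bm{\alpha}^*,\bm{\beta}^*,\bm{\lambda}^*,a^*,b^*)$. Each coordinatewise argmax map is the unique root of a smooth, strictly monotone equation, hence depends continuously on the remaining variables. Applying any one update map $U$ along the subsequence and using that the corresponding increment of $D$ vanishes in the limit, continuity of $D$ gives $D(U(x^*))=D(x^*)$; strict concavity in that block then forces $U(x^*)=x^*$. Running this through the $s$-, $\psi$-, and $T$-blocks shows the limit point satisfies $\partial D/\partial(\cdot)=0$ in each coordinate, i.e.\ it is a coordinatewise maximum point of the dual, as claimed. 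I would conclude by remarking that, $D$ being smooth and concave, this coordinatewise maximum is in fact a global dual optimum.
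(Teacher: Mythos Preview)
Your identification of GAS-II as exact block coordinate ascent on the concave dual $g(\bm{\alpha},\bm{\beta},\bm{\lambda},a,b)$ is precisely the paper's approach. The paper simply computes the partial derivatives \cref{con_partial}, observes that the three update families \cref{eq:s,eq:psiT,eq:TT} correspond to the block maximizations
\[
\bm{\alpha}^{(l+1)}=\mathop{\mathrm{argmax}}_{\bm{\alpha}} g,\qquad
\bm{\beta}^{(l+1)}=\mathop{\mathrm{argmax}}_{\bm{\beta}} g,\qquad
(\bm{\lambda}^{(l+1)},a^{(l+1)},b^{(l+1)})=\mathop{\mathrm{argmax}}_{\bm{\lambda},a,b} g,
\]
and then invokes Proposition~5.1 of \cite{tseng2001convergence}: since $g$ is continuous, quasiconcave, and hemivariate in each block, any cluster point of the iterates is a coordinatewise maximizer. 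That is the entire proof.

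Your hand-rolled ``monotone $+$ bounded $+$ continuity of the argmax map'' argument would also work, but the step you flag as the decisive obstacle---boundedness of the dual iterates---is in fact \emph{not needed} for the statement as written. The theorem asserts only that \emph{any} limit point is a coordinatewise maximum; it does not claim that limit points exist. Tseng's result is phrased in exactly this conditional form, and the paper leaves the existence question to the quantitative analysis in \cref{lemma:R} and \cref{thm:num}. So you can drop the third paragraph entirely and the remaining argument (strict concavity in each block, unique coordinatewise maximizer, vanishing increments along a convergent subsequence forcing stationarity) is a valid self-contained substitute for the Tseng citation.
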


\begin{theorem}\label{thm:num}
Given any threshold $\delta$ and stopping criterion
    \begin{multline*}
      \left\Vert \bm{\gamma}^{(l)}\bm{1}_N-\bm{\overline{u}}+t_1^{(l)} e^{\bm{x^{(l)}}} \right\Vert_2+ \left\Vert \bm{\gamma}^{(l)}\bm{1}_N-\bm{\hat{u}}-t_2^{(l)} e^{\bm{-y^{(l)}}}\right\Vert_2 + \left\Vert \bm{\gamma}^{(l)^T}\bm{1}_N-\bm{v} \right\Vert_2 \\
      + \left\Vert t_1^{(l)} \bm{1^T}e^{\bm{x^{(l)}}}-k_1 \right\Vert_2 + \left\Vert t_2^{(l)} \bm{1^T}e^{-\bm{y^{(l)}}}-k_2 \right\Vert_2 \leq \delta,
  \end{multline*}
the GAS-II outputs transport plan $\bm{\gamma}$
  within $l$ steps and  
  $$l\leq \left\lceil 4+\frac{8\sqrt{N} LR}{\delta} \right\rceil,$$
  where $L=\max \left\Vert \nabla^2 G(\bm{x},\bm{y},\bm{z}) \right\Vert_2 $.
\end{theorem}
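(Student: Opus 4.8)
The plan is to recognize GAS-II as exact block coordinate ascent on the smooth concave dual objective of \cref{thm:con} (equivalently, block coordinate descent on its negative $G$), and then to run the standard smooth--convex coordinate-descent complexity argument, with the twist that the stopping criterion is phrased through the gradient norm rather than the function value. First I would make the correspondence explicit: writing the dual variables in the log-domain coordinates $(\bm{x},\bm{y},\bm{z})$, the three solves \cref{eq:s,eq:psiT,eq:TT} that constitute one iteration each zero out one block of $\nabla G$ (they are exactly the first-order stationarity conditions in that block), while the five residual vectors appearing in the stopping criterion are precisely the block components of $\nabla G$ at the current iterate. Hence the quantity compared against $\delta$ is, up to a fixed constant relating a sum of block $\ell_2$-norms to the full $\ell_2$-norm, equal to $\|\nabla G(\bm{x}^{(l)},\bm{y}^{(l)},\bm{z}^{(l)})\|_2$.

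Next I would secure the two structural ingredients the estimate rests on. The first is $L$-smoothness, i.e.\ $\|\nabla^2 G\|_2\le L$ on the region visited by the iterates, which is exactly the constant $L$ in the statement. The second --- and this is where the extra mass constraints $\sum_i z_i=k_1$ and $\sum_i w_i=k_2$ distinguishing GAS-II from GAS-I become indispensable --- is that the iterates stay in a bounded set, so that a finite coordinatewise radius $R$ is available. With these in hand I would invoke the sufficient-decrease inequality for exact block minimization of an $L$-smooth function: each block solve lowers $G$ by at least $\tfrac{1}{2L}\|\nabla_{\mathrm{block}}G\|^2$, and summing over the blocks of one cycle lower-bounds the per-iteration decrease $G^{(l)}-G^{(l+1)}$ by $\tfrac{1}{2L}\|\nabla G^{(l)}\|_2^2$, after the routine bookkeeping (using smoothness) that compares the gradients of blocks updated later in the cycle with those at the start.

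I would then turn function-value decay into a residual bound. Convexity together with the diameter bound gives $G^{(l)}-G^*\le \|\nabla G^{(l)}\|_2\,\|\bm{\theta}^{(l)}-\bm{\theta}^*\|_2\le \sqrt{N}\,R\,\|\nabla G^{(l)}\|_2$, where $\bm{\theta}$ collects the dual coordinates and the factor $\sqrt{N}$ enters in passing from the coordinatewise radius $R$ to the $\ell_2$ distance. Feeding this into the sufficient-decrease inequality yields the scalar recursion $r^{(l+1)}\le r^{(l)}-\tfrac{(r^{(l)})^2}{2NLR^2}$ for the optimality gap $r^{(l)}=G^{(l)}-G^*$, whose solution is the $O(1/l)$ decay $r^{(l)}\le \tfrac{2NLR^2}{l}$. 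To upgrade this from $1/\sqrt{l}$ to $1/l$ in the residual, I would sum $\|\nabla G^{(m)}\|_2^2\le 2L\,(r^{(m)}-r^{(m+1)})$ over the second half $l/2<m\le l$; the sum telescopes to at most $2L\,r^{(l/2)}$, and since the smallest residual in the window is bounded by its average one gets $\min_m\|\nabla G^{(m)}\|_2=O(\sqrt{N}LR/l)$. Requiring this to drop below $\delta$, converting back to the sum-of-block-norms form of the stopping criterion, and accounting for the halving then produces the stated bound $l\le \lceil 4+8\sqrt{N}LR/\delta\rceil$.

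The step I expect to be the main obstacle is the boundedness/compactness claim underlying both $L$ and $R$. Since $G$ contains exponentials of the dual variables, its Hessian is \emph{not} uniformly bounded on the whole space, so $L$ and $R$ are finite only once the iterates are confined to a bounded region. Establishing that confinement is precisely the payoff of fixing the total mass $\bm{1}_N^{\top}\bm{\gamma}\,\bm{1}_N=\bm{1}_N^{\top}\bm{v}$ through the two additional constraints: they pin down $\sum_i z_i$ and $\sum_i w_i$ at every iteration and thereby furnish two-sided bounds on the dual iterates, hence on $R$ and on $L$. A secondary, genuinely fiddly point is the constant bookkeeping --- in the coordinate-descent sufficient-decrease step and in the second-half averaging --- that must be tracked carefully to reproduce the exact constants $4$, $8$, and the $\sqrt{N}$ in the statement.
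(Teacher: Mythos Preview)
Your high-level plan---view GAS-II as exact block coordinate ascent on the smooth concave dual, use $L$-smoothness for sufficient decrease, and couple this with a convexity inequality---matches the paper's. But the role you assign to $R$, and hence the mechanism by which the extra mass constraints enter, is not what the paper does, and as stated your version would not close.

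The paper's $R$ is \emph{not} a coordinatewise radius bounding $|x_i^{(l)}-x_i^*|$; it is a bound on the \emph{spread} $\max_i x_i^{(l)}-\min_i x_i^{(l)}$ (and likewise for $\bm{y}$ and for the optimum). No two-sided bound on the dual iterates themselves is ever established---indeed, in the paper's Lemma~\ref{lemma:R} the upper and lower estimates on $x_i$ both contain the common term $\ln\sum_j e^{-y_i}e^{-z_j}$, which only cancels in the difference. So your step ``$\|\bm{\theta}^{(l)}-\bm{\theta}^*\|_2\le \sqrt{N}\,R$'' is not available. The paper instead proves (its Lemma preceding the main proof) that
\[
\widetilde{G}(\bm{\Lambda}^{(l)})\;\le\; R\sqrt{N}\Bigl(\bigl\|\nabla_{\bm{x}}G^{(l)}\bigr\|_2+\bigl\|\nabla_{\bm{y}}G^{(l)}\bigr\|_2\Bigr),
\]
and the key device is the identity $\bm{1}_N^{\top}\nabla_{\bm{x}}G^{(l)}=\bm{1}_N^{\top}\nabla_{\bm{y}}G^{(l)}=0$, which holds precisely because after the $(\bm{z},t_1,t_2)$-update the total mass is conserved. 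This orthogonality lets one subtract any multiple of $\bm{1}_N$ inside $\langle \bm{x}^{(l)}-\bm{x}^*,\nabla_{\bm{x}}G^{(l)}\rangle$, pass to $\|\cdot\|_\infty$ via H{\"o}lder, and invoke the spread bound. That---not confinement of the iterates to a bounded set---is where the two added constraints of GAS-II are actually used.

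A second, smaller divergence: your endgame (average $\|\nabla G^{(m)}\|_2^2$ over the second half of the run) is not the paper's. The paper combines the two lower bounds on the per-step decrease,
\[
\widetilde{G}^{(l)}-\widetilde{G}^{(l+1)}\;\ge\;\max\Bigl\{\tfrac{1}{4NLR^2}\bigl(\widetilde{G}^{(l)}\bigr)^2,\;\tfrac{\delta^2}{4L}\Bigr\},
\]
and then invokes the ``switching strategy'' of Dvurechensky et al.\ to obtain $l\le\lceil 4+8\sqrt{N}LR/\delta\rceil$ directly. Your averaging argument could plausibly be pushed through to a bound of the same order, but it is a different route and recovering the exact constants $4$ and $8$ from it would require care.
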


We first show the convergence of GAS-II by rewriting the algorithm into a block coordinate ascent scheme and we present the proof of \cref{thm:con}.
\begin{proof}
The dual of the model \cref{con_MRM} is
\begin{multline}\label{con_dual2}
    \max_{\bm{\alpha},\bm{\beta},\bm{\lambda},a,b} g(\bm{\alpha},\bm{\beta},\bm{\lambda},a,b)
  =\max_{\bm{\alpha},\bm{\beta},\bm{\lambda},a,b}  -\varepsilon \sum_{i=1}^{N}\sum_{j=1}^{N} \eta_{ij}\ln(1+\rho_{ij})-\varepsilon \sum_{i=1}^{N} t_1 \phi_{i}\\
  -\varepsilon \sum_{i=1}^{N} t_2 \frac{1}{\xi_{i}} - \sum_{i=1}^{N} \alpha_{i} \overline{u}_i- \sum_{i=1}^{N} \beta_{i} \hat{u}_i-\sum_{j=1}^{N}\lambda_jv_j -ak_1 -b k_2. 
\end{multline}
Then we have 
\begin{equation}\label{con_partial}
	\begin{aligned}
	\frac{\partial g}{\partial\alpha_{i}}&=\sum_{j=1}^{N}\frac{ \eta_{ij}\rho_{ij}}{1+\rho_{ij}}+t_1 \phi_i-\overline{u}_i,
    \frac{\partial g}{\partial a}=\sum_{i=1}^{N} t_1 \phi_i-k_1, \\
	\frac{\partial g}{\partial \beta_i}&=\sum_{j=1}^{N}\frac{ \eta_{ij}\rho_{ij}}{1+\rho_{ij}}-t_2 \frac{1}{\xi_i}-\hat{u}_i,
    \frac{\partial g}{\partial b}=\sum_{i=1}^{N} t_2 \frac{1}{\xi_i}-k_2,\\
	\frac{\partial g}{\partial \lambda_j}&=\sum_{i=1}^{N}\frac{ \eta_{ij}\rho_{ij}}{1+\rho_{ij}}-v_j.\\
	\end{aligned}
\end{equation}

Solving \cref{con_MRM} with Algorithm \ref{alg:MRM II} is equivalent to
\begin{equation}\label{con_iter}
	\begin{aligned}
	(\bm{\alpha}^{(l+1)})&=\mathop{\mathrm{argmax}}\limits_{\bm{\alpha}}  g(\bm{\alpha},\bm{\beta}^{(l)},\bm{\lambda}^{(l)},a^{(l)},b^{(l)}),\\
	(\bm{\beta}^{(l+1)})&=\mathop{\mathrm{argmax}}\limits_{\bm{\beta}}  g(\bm{\alpha}^{(l+1)},\bm{\beta},\bm{\lambda}^{(l)},a^{(l)},b^{(l)}),\\
	(\bm{\lambda}^{(l+1)},a^{(l+1)}, b^{(l+1)})&=\mathop{\mathrm{argmax}}\limits_{\bm{\lambda},a ,b}  g(\bm{\alpha}^{(l+1)},\bm{\beta}^{(l+1)},\bm{\lambda},a,b),
	\end{aligned}
\end{equation}
which is the iteratiron of the block coordinate ascent. 
Based on the fact that $g$ is continuous,
quasiconcave, and hemivariate, according to the Proposition 5.1 of \cite{tseng2001convergence}, we know that any limit point of
$\left(\bm{\alpha}^{(l)},\bm{\beta}^{(l)},\bm{\lambda}^{(l)},a^{(l)},b^{(l)}\right)$ is a coordinatewise maximizer of $g(\bm{\alpha},\bm{\beta},\bm{\lambda},a,b)$.
\end{proof}

\cref{thm:con} shows that our proposed algorithm converges to a coordinatewise maximum point. Furthermore, we analyze the number of iterations of GAS-II in \cref{thm:num}.
Before the proof of \cref{thm:num}, some lemmas are required. First, we show that there is a threshold of the difference between the maximum and minimum for the variables defined in the following lemma. 
\begin{lemma}\label{lemma:R}
    Let $(\bm{\alpha}^{*},\bm{\beta}^{*},\bm{\lambda}^{*},a^{*},b^{*})$ be the optimal solution of the dual regularized MR-OPT model \cref{con_dual2}. 
    For the sake of simplicity, we denote $x_i =-\alpha_i / \varepsilon, y_i =-\beta_i / \varepsilon$.
    Based on the fact that $\gamma_{ij}>0$,  given a further assumption $\gamma_{ij}\geq \eta_{ij} / c$ as lower bound capacity constraints, where $c$ is a sufficiently large constant so that this assumption is true throughout the algorithm. We have
 $$\max_{i} x^{(l)}_i-\min_{i} x^{(l)}_i\leq R, \quad \max_{i} x^{*}_i-\min_{i} x^{*}_i\leq R,$$
 $$\max_{i} y^{(l)}_i-\min_{i} y^{(l)}_i\leq R, \quad \max_{i} y^{*}_i-\min_{i} y^{*}_i\leq R,$$
 where
  \begin{multline*}
     R= \ln b -\ln a-\ln v +\ln c + \ln\left(\max\left\{\max_{i} p_i+\max_i \Delta u_i, \max_i q_i\right\}\right) \\
     -\ln\left(\min\left\{\min_{i} p_i,\min_i(q_i-\Delta u_i)\right\}\right), 
 \end{multline*}
 \begin{gather*}
     a=\min_{ij} \eta_{ij}, \  b=\max_{ij} \eta_{ij}, \ v=\min_{ij} K_{ij},\\
     \ \Delta u_i=\overline{u}_i-\hat{u}_i, \
     p_i=\sum_{j=1}^{N}\eta_{ij}-\overline{u}_i, \ q_i=\sum_{j=1}^{N}\eta_{ij}-\hat{u}_i. 
 \end{gather*}
 Here, we assume that $p_i > q_i > 0, r_j>0$. If $p_i \leq 0$, note that $\sum_{j=1}^{N}\gamma_{ij}\leq \sum_{j=1}^{N}\eta_{ij}$, so $\overline{u}_i$ can be replaced by $\sum_{j=1}^{N}\eta_{ij}$. If $r_j = 0$, it means that $\gamma_{ij} = \eta_{ij}$ for $i= 1,\cdots,N$. Thus, these variables $\gamma_{ij} (i=1,\cdots,N)$ are fixed and can be removed. 
 
\end{lemma}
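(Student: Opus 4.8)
The plan is to convert the claimed range bounds on $\bm{x}$ and $\bm{y}$ into multiplicative bounds on the slack variables $\bm{z}$ and $\bm{w}$, and then to control these through the equality constraints together with the box constraints on $\bm{\gamma}$. Recalling from \cref{Langrange-1} that $\phi_i=e^{-\alpha_i/\varepsilon}=e^{x_i}$ and $\xi_i=e^{-\beta_i/\varepsilon}=e^{y_i}$, and reading off from the stationarity conditions \cref{con_partial} (matched against the constraints of \cref{con_MRM}) that the optimal slacks satisfy $z_i=t_1\phi_i=t_1 e^{x_i}$ and $w_i=t_2/\xi_i=t_2 e^{-y_i}$, I note that the common factors $t_1,t_2$ cancel in any index difference, so that
\[
\max_i x_i-\min_i x_i=\ln\frac{\max_i z_i}{\min_i z_i},\qquad \max_i y_i-\min_i y_i=\ln\frac{\max_i w_i}{\min_i w_i}.
\]
Hence it suffices to bound the two log-ratios by $R$. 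I would establish this after each block update (and therefore at the limit point, marked $*$), using that the update \cref{eq:s} enforces $\sum_j\gamma_{ij}-w_i=\hat{u}_i$ while the parametrization keeps $z_i+w_i=\Delta u_i$, so that the primal representations $z_i=\overline{u}_i-\sum_j\gamma_{ij}$ and $w_i=\sum_j\gamma_{ij}-\hat{u}_i$ are available at the relevant iterates.

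For the numerators, the box constraints $0\le\gamma_{ij}\le\eta_{ij}$ give $\sum_j\gamma_{ij}\le\sum_j\eta_{ij}$, so $w_i\le\sum_j\eta_{ij}-\hat{u}_i=q_i$; together with $z_i+w_i=\Delta u_i>0$ this also gives $z_i\le\Delta u_i$. Since $q_i=p_i+\Delta u_i$ and $p_i>0$, both slacks are bounded above by $\max_i p_i+\max_i\Delta u_i$, i.e.\ by the first term inside the outer maximum defining $R$. The degenerate case $p_i\le 0$ is absorbed by replacing $\overline{u}_i$ with $\sum_j\eta_{ij}$ exactly as stated in the lemma, and any column with $r_j=\sum_i\eta_{ij}-v_j=0$ is removed beforehand since there $\gamma_{ij}=\eta_{ij}$ is forced for all $i$.

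The heart of the argument is the lower bounds on $\min_i z_i$ and $\min_i w_i$, and this is where the auxiliary capacity assumption $\gamma_{ij}\ge\eta_{ij}/c$ is used. Rewriting it through $\gamma_{ij}=\rho_{ij}\eta_{ij}/(1+\rho_{ij})$ yields the uniform floor $\rho_{ij}\ge 1/(c-1)$ on the effective kernel $\rho_{ij}=s_iK_{ij}\psi_j$; combined with the data bounds $a\le\eta_{ij}\le b$ and the kernel floor $K_{ij}\ge v$, this controls $s_i\psi_j$ from below, while the column constraints \cref{eq:psiT} (nontrivial precisely because $r_j>0$) control it from above. Comparing two rows $i,k$ at a common column $j$ then bounds the ratio $s_i/s_k=(s_i\psi_j)/(s_k\psi_j)$, and via the identity $Ts_i=z_i/w_i$ and the representations above, the ratios $z_i/z_k$ and $w_i/w_k$. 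Collecting the factors $b/a$, $1/v$ and $c$ coming from the kernel and capacity estimates, and the factors $\max_i p_i+\max_i\Delta u_i$ and $1/\min_i p_i$ coming from the marginal estimates, reproduces exactly the constant $R$; the identical estimates hold at $(\bm{\alpha}^*,\bm{\beta}^*,\bm{\lambda}^*,a^*,b^*)$ because all constraints of \cref{con_MRM} hold simultaneously there, making the same representations of $z_i^*,w_i^*$ valid.

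I expect the genuine difficulty to be the \emph{lower} bounds of the previous paragraph. The naive per-index estimate $w_i\ge\frac{1}{c}\sum_j\eta_{ij}-\hat{u}_i$ can be non-positive, so a uniform positive bound on each individual slack is simply unavailable; one is forced to argue multiplicatively across rows at a shared column, and it is exactly this need that motivates the lower capacity constraint $\gamma_{ij}\ge\eta_{ij}/c$ and explains the appearance of $c$ and $v=\min_{ij}K_{ij}$ in $R$. By contrast, the reduction to log-ratios and the upper bounds are routine, so the proof effort should concentrate on making the two-sided control of $s_i\psi_j$ quantitative and uniform in the iteration index $l$.
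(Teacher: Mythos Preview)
Your route is genuinely different from the paper's, and more circuitous than necessary. The paper does \emph{not} go through the slacks, does \emph{not} use the column constraints, and does \emph{not} compare two rows at a common column. Instead it works only with the single row optimality equation $\partial g/\partial\alpha_i=0$, rewritten as
\[
\sum_{j}\frac{\eta_{ij}}{1+\rho_{ij}}=p_i+t_1 e^{x_i},
\]
and sandwiches the left side by the elementary two--sided estimate
\[
\frac{1}{c\,\rho_{ij}}\le\frac{1}{1+\rho_{ij}}<\frac{1}{\rho_{ij}},
\]
the left inequality being precisely the hypothesis $\gamma_{ij}\ge\eta_{ij}/c$. Since $1/\rho_{ij}=e^{-x_i}/\bigl(e^{y_i}K_{ij}e^{z_j}\bigr)$, both sides of the sandwich carry the common factor $e^{-x_i}\sum_j\eta_{ij}/(e^{y_i}K_{ij}e^{z_j})$; solving for $e^{x_i}$ and taking the ratio of the upper to the lower estimate produces exactly the pieces $b/a$, $1/v$, $c$, $\max_i p_i+\max_i\Delta u_i$ and $1/\min_i p_i$ of $R$, with the common factor cancelling. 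The $y$--bound is obtained analogously from $\partial g/\partial\beta_i=0$.

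Your upper bounds $z_i\le\Delta u_i$ and $w_i\le q_i$ are correct, but the lower--bound scheme you sketch has a concrete gap: a bound on $s_i/s_k$ controls $(x_i+y_i)-(x_k+y_k)$, not $x_i-x_k$, and the conversion to $z_i/z_k$ via $z_i=Ts_i\Delta u_i/(1+Ts_i)$ introduces an additional $\max_i\Delta u_i/\min_i\Delta u_i$ factor absent from $R$. So ``collecting the factors reproduces exactly the constant $R$'' is not right as stated. The paper's device---extracting both bounds from the \emph{same} row equation so that the $i$--dependent part cancels in the ratio---is what makes the argument short; I would redo your lower bound along those lines rather than via the column constraints.
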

\begin{proof}
Firstly, we prove that $\max_{i} x_i-\min_{i} x_i\leq R$.
Note that
$$ \overline{u}_i=\sum_{j=1}^{N}\frac{\eta_{ij}\rho_{ij}}{1+\rho_{ij}}+t_1 e^{x_i}=\sum_{j=1}^{N}\left(\eta_{ij}-\frac{\eta_{ij}}{1+e^{x_i}e^{y_i} K_{ij} e^{z_j}}\right)+t_1 e^{x_i}. $$
With the assumption $\gamma_{ij} \geq \eta_{ij} / c$, it follows
$$ \frac{1}{c\rho_{ij}}\leq \frac{1}{1+\rho_{ij}}, \quad \sum_{j=1}^{N} \frac{\eta_{ij}}{c\rho_{ij}}\leq  \sum_{j=1}^{N} \frac{\eta_{ij}}{1+\rho_{ij}}=\sum_{j=1}^{N}\eta_{ij}-\overline{u}_i+t_1 e^{x_i}=p_i+t_1 e^{x_i},$$
and then
\begin{equation*}
    e^{x_i} \geq \frac{1}{c(p_i+t_1 e^{x_i})}\sum_{j=1}^{N}\frac{\eta_{ij}}{e^{y_i} K_{ij} e^{z_j}} \geq \frac{a}{c(\max_{i} p_i+\max_i \Delta u_i)}\sum_{j=1}^{N}\frac{1}{e^{y_i} e^{z_j}}. 
\end{equation*}
That is
$$\min_{i} x_i \geq \ln a -\ln c -\ln\left(\max_{i} p_i+\max_i \Delta u_i\right)+\ln\left(\sum_{j=1}^{N}\frac{1}{e^{y_i} e^{z_j}}\right). $$

Now we calculate the upper of $x_i$. 
$$\sum_{j=1}^{N} \frac{\eta_{ij}}{\rho_{ij}}>  \sum_{j=1}^{N} \frac{\eta_{ij}}{1+\rho_{ij}}=\sum_{j=1}^{N}\eta_{ij}-\overline{u}_i+t_1 e^{x_i}=p_i+t_1 e^{x_i},$$
it signifies that
$$\max_{i} x_i \leq \ln b -\ln v -\ln\min_{i} p_i+\ln\left(\sum_{j=1}^{N}\frac{1}{e^{y_i} e^{z_j}}\right). $$
Therefore
$$\max_{i} x_i-\min_{i} x_i\leq \ln b -\ln a-\ln v +\ln c + \ln\left(\max_{i} p_i+\max_i \Delta u_i\right)-\ln\min_{i} p_i\leq R. $$
Similarly, we have
\begin{equation*}
\begin{aligned}
    \max_{i} y_i-\min_{i} y_i \leq & \ln b -\ln a-\ln v +\ln c + \ln\max_{i} q_i-\ln\left(\min_i(q_i-\Delta u_i)\right)\leq R.
\end{aligned}
\end{equation*}

The proof of this result for $y_i$ is quite similar to that given for $x_i$ and so is omitted.
\end{proof}

Next, we use this threshold to control the difference between the objective value in the calculation process and the optimal value.
\begin{lemma}
The dual problem can be rewritten as
 $$\min_{\bm{x},\bm{y},\bm{z},t_1,t_2}\quad G(\bm{x},\bm{y},\bm{z},t_1,t_2),$$
 where
 \begin{multline*}
          G(\bm{x},\bm{y},\bm{z},t_1,t_2)= \sum_{i=1}^{N}\sum_{j=1}^{N} \eta_{ij}\ln\left(1+\rho_{ij}\right)+\sum_{i=1}^{N} t_1 e^{x_{i}}+ \sum_{i=1}^{N} t_2 e^{-y_{i}}\\
          -\sum_{i=1}^{N} x_{i} \overline{u}_i- \sum_{i=1}^{N} y_{i} \hat{u}_i-\sum_{j=1}^{N}z_j v_j -k_1 \ln t_1 -k_2 \ln t_2, 
 \end{multline*}
where $z_j$ is defined as $-\lambda_j / \varepsilon$. 

Let $\bm{\Lambda} = \left[ \bm{x},\bm{y},\bm{z},t_1,t_2 \right] \in \mathbb{R}^{3N + 2}$. $\bm{\Lambda}^{*}$ and $\bm{\Lambda}^{(l)}$ are the optimal solution and variables in the $l$-th iteration, respectively. Denote
 $$\widetilde{G}(\bm{\Lambda})=G(\bm{\Lambda})-G(\bm{\Lambda}^{*}),$$
 and then we have
 \begin{equation*}
    \widetilde{G}(\bm{\Lambda}^{(l)})\leq 
    R \sqrt{N}\left( \ \left\Vert \bm{\gamma}^{(l)}\bm{1}_N-\bm{\overline{u}}+t_1 ^{(l)}e^{\bm{x^{(l)}}}\right\Vert_2
    +\left\Vert \bm{\gamma}^{(l)}\bm{1}_N-\bm{\hat{u}}-t_2^{(l)} e^{\bm{-y^{(l)}}}\right\Vert_2\right). 
\end{equation*}
\end{lemma}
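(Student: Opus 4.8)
The plan is to derive the bound from the first-order convexity inequality for the dual and then discard all but the two gradient blocks that survive at the end of a sweep. The dual objective $g$ in \cref{con_dual2} is concave in the multipliers $(\bm{\alpha},\bm{\beta},\bm{\lambda},a,b)$ — each nonlinear term is either $-\varepsilon e^{(\cdot)}$ or $-\varepsilon\eta_{ij}\ln(1+e^{(\cdot)})$ with an affine exponent — so $G=-g/\varepsilon$ is convex in those variables and the tangent inequality $\widetilde{G}(\bm{\Lambda}^{(l)})=G(\bm{\Lambda}^{(l)})-G(\bm{\Lambda}^{*})\le\langle\nabla G(\bm{\Lambda}^{(l)}),\bm{\Lambda}^{(l)}-\bm{\Lambda}^{*}\rangle$ holds. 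I would invoke convexity in the multiplier coordinates rather than directly in $(\bm{x},\bm{y},\bm{z},t_1,t_2)$, since $a\mapsto t_1=e^{-a/\varepsilon}$ is nonlinear and $t_1e^{x_i}$ is not jointly convex; the chain rule (with the factors $-\varepsilon$ cancelling) makes the two descriptions agree on every surviving term. First I would compute $\nabla G$: using $\gamma_{ij}=\eta_{ij}\rho_{ij}/(1+\rho_{ij})$ one gets $\partial G/\partial x_i=(\bm{\gamma}\bm{1}_N-\bm{\overline{u}}+t_1e^{\bm{x}})_i$, $\partial G/\partial y_i=(\bm{\gamma}\bm{1}_N-\bm{\hat{u}}-t_2e^{-\bm{y}})_i$, $\partial G/\partial z_j=((\bm{\gamma})^{T}\bm{1}_N-\bm{v})_j$, and $t_1\,\partial G/\partial t_1=t_1\bm{1}_N^{T}e^{\bm{x}}-k_1$, $t_2\,\partial G/\partial t_2=t_2\bm{1}_N^{T}e^{-\bm{y}}-k_2$; i.e. the five blocks are exactly the five norms appearing in the stopping criterion of \cref{thm:num}.

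Next I would use that $\bm{\Lambda}^{(l)}$ is recorded at the end of a sweep, whose last block update in \cref{con_iter} maximizes over $(\bm{\lambda},a,b)$ and therefore annihilates $\partial G/\partial\bm{z}$, $\partial G/\partial t_1$ and $\partial G/\partial t_2$. Hence only the $\bm{x}$- and $\bm{y}$-blocks remain, $\widetilde{G}(\bm{\Lambda}^{(l)})\le\langle\nabla_{\bm{x}}G,\bm{x}^{(l)}-\bm{x}^{*}\rangle+\langle\nabla_{\bm{y}}G,\bm{y}^{(l)}-\bm{y}^{*}\rangle$. The key structural fact, which is precisely what the extra variable $T$ buys in GAS-II, is that these same vanishing conditions force the surviving gradients to be orthogonal to $\bm{1}_N$: from $\partial G/\partial\bm{z}=\bm{0}$ one gets $(\bm{\gamma}^{(l)})^{T}\bm{1}_N=\bm{v}$ and hence the conserved mass $\bm{1}_N^{T}\bm{\gamma}^{(l)}\bm{1}_N=\bm{1}_N^{T}\bm{v}$, while $\partial G/\partial t_1=0$ gives $t_1\bm{1}_N^{T}e^{\bm{x}}=k_1=\bm{1}_N^{T}\bm{\overline{u}}-\bm{1}_N^{T}\bm{v}$; adding these shows $\bm{1}_N^{T}\nabla_{\bm{x}}G=0$, and the identical computation with $t_2$ and $k_2=\bm{1}_N^{T}\bm{v}-\bm{1}_N^{T}\bm{\hat{u}}$ gives $\bm{1}_N^{T}\nabla_{\bm{y}}G=0$.

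Because each surviving gradient is orthogonal to $\bm{1}_N$, I may replace $\bm{x}^{(l)}-\bm{x}^{*}$ by its centered version $\bm{x}^{(l)}-\bm{x}^{*}-c\,\bm{1}_N$ without changing the inner product, choosing $c$ to be the midpoint of $\{x_i^{(l)}-x_i^{*}\}_i$. By \cref{lemma:R} both $\bm{x}^{(l)}$ and $\bm{x}^{*}$ have spread at most $R$, so the difference has spread at most $2R$ and the midpoint-centered vector has $\ell_\infty$ norm at most $R$, whence $\ell_2$ norm at most $\sqrt{N}R$; the same holds for $\bm{y}$. Cauchy--Schwarz then gives $\langle\nabla_{\bm{x}}G,\bm{x}^{(l)}-\bm{x}^{*}\rangle\le\sqrt{N}R\,\|\nabla_{\bm{x}}G\|_2$ and likewise for $\bm{y}$, and substituting the two gradient expressions yields the claimed inequality. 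I expect the main obstacle to be exactly this last step: obtaining the constant $R\sqrt{N}$ rather than $2R\sqrt{N}$ requires simultaneously the orthogonality to $\bm{1}_N$ — which hinges on the mass conservation enforced only by GAS-II's extra variable $T$ — and the midpoint shift combined with the \emph{per-coordinate} spread bound of \cref{lemma:R}; the convexity subtlety noted above (working in the multiplier coordinates) is the second point that must be handled with care.
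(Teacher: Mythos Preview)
Your proposal is correct and follows essentially the same route as the paper: apply the first-order convexity inequality for the dual, observe that the $\bm{z}$-, $t_1$-, and $t_2$-gradient blocks vanish at the end of a sweep, deduce orthogonality of the surviving gradients to $\bm{1}_N$, shift by a constant multiple of $\bm{1}_N$, and invoke the spread bound of \cref{lemma:R}. The only cosmetic difference is that the paper centers $\bm{x}^{(l)}$ and $\bm{x}^{*}$ separately at their individual midpoints and uses H\"older with the $\ell_\infty/\ell_1$ pairing (each contributing $R\sqrt{N}/2$), whereas you center the difference once; both yield the same constant $R\sqrt{N}$. Your remark on convexity is in fact more careful than the paper's: the paper simply writes ``with the convexity of the function $G$'', but as you note $t_1e^{x_i}$ is not jointly convex in $(t_1,x_i)$, so the rigorous statement is concavity of $g$ in the original multipliers $(\bm{\alpha},\bm{\beta},\bm{\lambda},a,b)$, after which the chain rule and the vanishing $a$- and $b$-gradients deliver the inequality in the $(\bm{x},\bm{y})$ variables exactly as you describe.
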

\begin{proof}
With the convexity of the function $G$, 
\begin{equation*}
    \widetilde{G}\left(\bm{\Lambda}^{(l)}\right)
    \leq \left\langle \bm{\Lambda}^{(l)} -\bm{\Lambda}^*, \nabla_{\bm{\Lambda}}\widetilde{G}\left(\bm{\Lambda}^{(l)}\right) \right\rangle, 
\end{equation*}
where $\nabla_{\bm{\Lambda}}\widetilde{G}\left(\bm{\Lambda}^{(l)}\right)$ is the summation of
\begin{multline*}
    \nabla_{\bm{x}}\widetilde{G}\left(\bm{\Lambda}^{(l)}\right) = \bm{\gamma}^{(l)}\bm{1}_N-\bm{\overline{u}}+t_1^{(l)} e^{\bm{x^{(l)}}}, \\ 
    \nabla_{\bm{y}}\widetilde{G}\left(\bm{\Lambda}^{(l)}\right) = \bm{\gamma}^{(l)}\bm{1}_N-\bm{\hat{u}}-t_2^{(l)} e^{\bm{-y^{(l)}}}, \
    \nabla_{\bm{z}}\widetilde{G}\left(\bm{\Lambda}^{(l)}\right) = \bm{\gamma}^{(l)^T}\bm{1}_N-\bm{v}, \\
    \nabla_{t_1}\widetilde{G}\left(\bm{\Lambda}^{(l)}\right) = \bm{1}_N^{T}e^{\bm{x}^{(l)}}-\frac{k_1}{t_1^{(l)}}, \
    \nabla_{t_2}\widetilde{G}\left(\bm{\Lambda}^{(l)}\right) = \bm{1}_N^{T}e^{-\bm{y}_i^{(l)}}-\frac{k_2}{t_2^{(l)}}. 
\end{multline*}

Since when the $l$-th iteration is completed, 
$$ \bm{\gamma}^{(l)^{T}} \bm{1}_N=\bm{v}, \ \bm{1}_N^{T}e^{\bm{x}^{(l)}}=\frac{k_1}{t_1^{(l)}}, \ \bm{1}_N^{T}e^{-\bm{y}^{(l)}}=\frac{k_2}{t_2^{(l)}},$$ 
we have that $\left\langle \bm{1}_N, \bm{\gamma}^{(l)}\bm{1}_N-\bm{\overline{u}}+t_1^{(l)} e^{\bm{x^{(l)}}} \right\rangle = \left\langle \bm{1}_N, \bm{\gamma}^{(l)}\bm{1}_N-\bm{\hat{u}}-t_2^{(l)} e^{\bm{-y^{(l)}}} \right\rangle=0$. 
Taking $a=\dfrac{\max_i x_i^{(l)}+\min_i x_i^{(l)}}{2}$, by H{\"o}lder's inequality and \cref{lemma:R}, we obtain
\begin{multline*}
         \left\langle \bm{x}^{(l)}, \bm{\gamma}^{(l)}\bm{1}_N-\bm{\overline{u}}+t_1^{(l)}e^{\bm{x^{(l)}}} \right\rangle= \left\langle \bm{x}^{(l)} -a\bm{1}_N, \bm{\gamma}^{(l)}\bm{1}_N-\bm{\overline{u}}+t_1^{(l)}e^{\bm{x^{(l)}}} \right\rangle \\
    \leq \left\Vert \bm{x}^{(l)} -a\bm{1}_N \right\Vert_{\infty} \left\Vert \bm{\gamma}^{(l)}\bm{1}_N-\bm{\overline{u}}+t_1^{(l)}e^{\bm{x^{(l)}}} \right\Vert_1 \\
    = \frac{\max_i x_i^{(l)}-\min_i x_i^{(l)}}{2} \left\Vert \bm{\gamma}^{(l)}\bm{1}_N-\bm{\overline{u}}+t_1^{(l)}e^{\bm{x^{(l)}}} \right\Vert_1 \\
    \leq \frac{R}{2} \left\Vert \bm{\gamma}^{(l)}\bm{1}_N-\bm{\overline{u}}+t_1^{(l)}e^{\bm{x^{(l)}}} \right\Vert_1
    \leq  \frac{R\sqrt{N}}{2} \left\Vert \bm{\gamma}^{(l)}\bm{1}_N-\bm{\overline{u}}+t_1^{(l)}e^{\bm{x^{(l)}}} \right\Vert_2.  
\end{multline*}
Similarly, 
$$\left\langle \bm{x}^{*}, \bm{\gamma}^{(l)}\bm{1}_N-\bm{\overline{u}}+t_1^{(l)}e^{\bm{x^{(l)}}} \right\rangle\leq \frac{R\sqrt{N}}{2} \left\Vert \bm{\gamma}^{(l)}\bm{1}_N-\bm{\overline{u}}+t_1^{(l)}e^{\bm{x^{(l)}}} \right\Vert_2, $$
and then
$$\left\langle \bm{x}^{(l)}-\bm{x}^{*}, \bm{\gamma}^{(l)}\bm{1}_N-\bm{\overline{u}}+t_1^{(l)}e^{\bm{x^{(l)}}} \right\rangle \leq R\sqrt{N} \left\Vert \bm{\gamma}^{(l)}\bm{1}_N-\bm{\overline{u}}+t_1^{(l)}e^{\bm{x^{(l)}}} \right\Vert_2. $$

As same as the proof for $\bm{x}$, we can prove the same conclusion for $\bm{y}$, and hence this lemma holds. 
\quad
\end{proof}

Based on the above two lemmas, we prove \cref{thm:num} which demonstrates the number of iterations of our GAS-II algorithm. 

\begin{proof}
From the $l$-th iteration to the $(l+1)$-th iteration, $\bm{x}^{(l+1)}$, $\bm{y}^{(l+1)}$, $\bm{z}^{(l+1)}$, $t_1^{(l+1)}$, and $ t_2^{(l+1)}$ are calculated.

We firstly consider $\bm{x}^{(l+1)}$ and $\bm{y}^{(l+1)}$. 
Let
$$\bm{\hat{x}}^{(l+1)}=\bm{x}^{(l)}-\frac{1}{L}\nabla_{\bm{x}}\widetilde{G}\left(\bm{\Lambda}^{(l)}\right), \ \bm{\hat{y}}^{(l+1)}=\bm{y}^{(l)}-\frac{1}{L}\nabla_{\bm{y}}\widetilde{G}\left(\bm{\Lambda}^{(l)}\right).$$ 
Then because $ \widetilde{G}$ is a L-smooth function \cite{beck2017first}, we obtain
\begin{multline*}
    \widetilde{G}\left(\bm{\hat{x}}^{(l+1)},\bm{\hat{y}}^{(l+1)},\bm{z}^{(l)},t_1^{(l)},t_2^{(l)}\right) \\
    \leq  \widetilde{G}\left( \bm{\Lambda}^{(l)} \right) - \frac{1}{2L} \left\Vert \nabla_{\bm{x}}\widetilde{G}\left(\bm{\Lambda}^{(l)}\right) \right\Vert_2^2-\frac{1}{2L}\left\Vert \nabla_{\bm{y}}\widetilde{G}\left(\bm{\Lambda}^{(l)}\right)\right\Vert_2^2.
\end{multline*}
Note that 
$$ \left\Vert \bm{\gamma}^{(l)^T}\bm{1}_N-\bm{v} \right\Vert_2= \left\Vert t_1^{(l)} \bm{1^T}e^{\bm{x^{(l)}}}-k_1 \right\Vert_2 = \left\Vert t_2^{(l)} \bm{1^T}e^{-\bm{y^{(l)}}}-k_2 \right\Vert_2 = 0,$$ 
therefore 
\begin{equation*}
    \left\Vert \bm{\gamma}^{(l)}\bm{1}_N-\bm{\overline{u}}+t_1^{(l)} e^{\bm{x^{(l)}}} \right\Vert_2 + \left\Vert \bm{\gamma}^{(l)}\bm{1}_N-\bm{\hat{u}}-t_2^{(l)} e^{\bm{-y^{(l)}}} \right\Vert_2 \geq \delta 
\end{equation*}
before the iteration terminates. Subsequently 
\begin{equation*}
    \begin{aligned}
        & \widetilde{G}\left(\bm{x}^{(l)},\bm{y}^{(l)}, \bm{z}^{(l)}, t_1^{(l)},t_2^{(l)} \right) -\widetilde{G}\left(\bm{x}^{(l+1)},\bm{y}^{(l+1)},\bm{z}^{(l)},t_1^{(l)},t_2^{(l)}\right) \\
        \geq & \  \widetilde{G}\left(\bm{x}^{(l)},\bm{y}^{(l)},\bm{z}^{(l)},t_1^{(l)},t_2^{(l)}\right)-\widetilde{G}\left(\bm{\hat{x}}^{(l+1)},\bm{\hat{y}}^{(l+1)},\bm{z}^{(l)},t_1^{(l)},t_2^{(l)}\right)\\
        \geq & \ \frac{1}{2L}\left\Vert \nabla_{\bm{x}}\widetilde{G}\left(\bm{x}^{(l)},\bm{y}^{(l)},\bm{z}^{(l)},t_1^{(l)},t_2^{(l)}\right)\right\Vert_2^2+\frac{1}{2L}\left\Vert \nabla_{\bm{y}}\widetilde{G}\left(\bm{x}^{(l)},\bm{y}^{(l)},\bm{z}^{(l)},t_1^{(l)},t_2^{(l)}\right)\right\Vert_2^2\\
        \geq & \ \frac{1}{2 L} \left( \left\Vert \bm{\gamma}^{(l)}\bm{1}_N-\bm{\overline{u}}+t_1^{(l)}  e^{\bm{x^{(l)}}} \right\Vert_2 \right)^2
        +\frac{1}{2 L}\left(\left\Vert \bm{\gamma}^{(l)}\bm{1}_N-\bm{\hat{u}}-t_2 ^{(l)}e^{\bm{-y^{(l)}}}\right\Vert_2\right)^2\\
        \geq & \ \frac{1}{4 L}\left(\left\Vert \bm{\gamma}^{(l)}\bm{1}_N-\bm{\overline{u}}+t_1^{(l)}  e^{\bm{x^{(l)}}}\right\Vert_2
        +\left\Vert \bm{\gamma}^{(l)}\bm{1}_N-\bm{\hat{u}}-t_2^{(l)} e^{\bm{-y^{(l)}}}\right\Vert_2\right)^2.
    \end{aligned}
\end{equation*}
Thus we have
\begin{multline*}
    \widetilde{G}\left(\bm{x}^{(l)},\bm{y}^{(l)},\bm{z}^{(l)},t_1^{(l)},t_2^{(l)}\right)-\widetilde{G}\left(\bm{x}^{(l+1)},\bm{y}^{(l+1)},\bm{z}^{(l)},t_1^{(l)},t_2^{(l)}\right)\\
    \geq \max\left\{\frac{1}{4NLR^2}\widetilde{G}(\bm{x}^{(l)},\bm{y}^{(l)},\bm{z}^{(l)},t_1^{(l)},t_2^{(l)})^2, \frac{\delta^2}{4L}\right\}. 
\end{multline*}

As same as the above proof, we can show that 
$$\widetilde{G}\left(\bm{x}^{(l+1)},\bm{y}^{(l+1)},\bm{z}^{(l)},t_1^{(l)},t_2^{(l)}\right)-\widetilde{G}
\left(\bm{x}^{(l+1)},\bm{y}^{(l+1)},\bm{z}^{(l+1)},t_1^{(l+1)},t_2^{(l+1)}\right)\geq 0.$$
This results in
\begin{multline*}
    \widetilde{G}\left(\bm{x}^{(l)},\bm{y}^{(l)},\bm{z}^{(l)},t_1^{(l)},t_2^{(l)}\right)-\widetilde{G}\left(\bm{x}^{(l+1)},\bm{y}^{(l+1)},\bm{z}^{(l+1)},t_1^{(l+1)},t_2^{(l+1)}\right)\\
    \geq \max\left\{\frac{1}{4NLR^2}\widetilde{G}\left(\bm{x}^{(l)},\bm{y}^{(l)},\bm{z}^{(l)},t_1^{(l)},t_2^{(l)}\right)^2, \frac{\delta^2}{4 L}\right\}. 
\end{multline*}
With the switching strategy in Theorem 1 of \cite{dvurechensky2018computational}, the number of iterations $l$ satisfies
$$l\leq \left\lceil 4+\frac{8 \sqrt{N} LR}{\delta} \right\rceil.$$
\quad
\end{proof}

\section{Numerical Experiments}
\label{sec:numerical}
In this section, we conduct numerical experiments to demonstrate the accuracy and efficiency of our proposed method on both 1D case and 2D case. The solution obtained by Gurobi \cite{optimization2021gurobi} is regarded as the ground truth of the OPT model. 
We extend the Iterative Bregman Projections~(IBP) algorithm \cite{IBP} to the OPT model as a baseline, which is shown in details in Appendix \ref{sec:IBP}.
The accuracy and computational cost of these algorithms are compared in the following experiments.

In the numerical experiments, the regularization parameter $\varepsilon$ is set to $10^{-3}$ for among the IBP, the GAS-I, and the GAS-II algorithms. We use `-' to denote that the computational time exceeds 10800s or the memory required exceeds the RAM size. When Gurobi runs out of memory in large-scale scenarios, we use `N/A' (not available) to represent the relative error due to the lack of ground truth. For each scenario, the computational time and relative error results are averaged over 10 times experiments. All the experiments are conducted in Matlab R2018b on a platform with 448G RAM, Intel(R) Xeon(R) Gold E5-2683v3 CPU @2.00GHz with 14 cores. 

\subsection{1D Case}

In the 1D case, we consider the uniform grid points and generate two discrete distributions on the grid points from the standard uniform distribution on $[0,1]$
$$\bm{u}=(u_1, u_2, \cdots,u_{N}), \quad \bm{v}=(v_1,v_2,\cdots,v_{N}).$$ Then $\bm{u}$ and $\bm{v}$ are normalized so that 
$$\sum_{i=1}^{N}u_i=1,\quad \sum_{j=1}^{N}v_j=1.$$
We examine two types of bounds setting for the marginal distribution $\bm{u}$. One is multiplying a random number to each element of $\bm{u}$, the other is multiplying a constant to all elements of $\bm{u}$.
Without loss of generality, the upper and lower bounds are set in each of the two ways described above as follows
$$\bm{\hat{u}}=\bm{k}\odot \bm{u} = \left(k_1 u_1, \cdots, k_N u_N\right), \quad \bm{\overline{u}}= a\bm{u}=\left( a u_1, \cdots, a u_N\right), $$
where $\bm{k}\in \mathbb{R}^{N}$ is generated from the standard uniform distribution of $[0,1]$, and $a$ is a constant. 
Experimentally, $a$ is set to $1.2$ and $1.5$ in the following part. 
Other upper and lower bound settings are also plausible, such as multiplying $\bm{u}$ by random numbers as the upper bound. In fact, different settings scarcely affect the efficacy of our algorithms compared to the baselines. Due to space limitations, further discussion is omitted.

Referring to \cite{wu2022double}, to guarantee that
the feasible set is non-empty, $2\bm{\overline{u}}\bm{v}^T$ is chosen as the upper bound matrix of $\bm{\gamma}$, since it is easy to find that $\bm{\gamma}^*=\bm{u}\bm{v}^T, \bm{u}^*=\bm{u}$ is one of the feasible solutions. 
Now we employ the Wasserstein-2 metric as the transport cost and set the production cost to be zero, that is 
$$C_{ij}=h^2(i-j)^2,$$
where $h$ is grid spacing.

\begin{table}
\begin{center}
\centering
\subtable[The Average Computational Time]{
\begin{tabular}{c c| c c c c }
	\hline
	\rule[-1ex]{0pt}{2.5ex} \multirow{2}{*}{$a$} &  \multirow{2}{*}{$N$}  &   \multicolumn{4}{c}{times (sec)}  \\ \cline{3-6}  
	   & & GAS-I &GAS-II & IBP & Gurobi \\
    \hline
    \multirow{4}{*}{1.2} & $1000$ &  3.95 & 3.20 & 29.3 & 14.3 \\
    & $2000$  & 15.6& 10.9 &328.0 &68.9 \\
    & $4000$ &76.4 & 52.2&1958.1 & 416.5  \\
    & $8000$  & 311.0& 194.1 & 9734.9 &- \\
    \hline
    \multirow{5}{*}{1.5} & $1000$  & 1.02 & 1.67 & 20.3 & 12.1 \\
    & $2000$&  4.18& 4.64 & 243.4 &  43.5 \\
    & $4000$& 18.6& 19.7 & 1316.8 & 183.2 \\
    & $8000$  & 77.2& 95.5&  6568.1 &800.5 \\
    & $16000$ & 330.5& 522.8&- &   -\\
    \hline       
\end{tabular}
\label{tab1:subtab11}
}
\subtable[The Relative Error]{
\begin{tabular}{c c| c c c}
	\hline
	\rule[-1ex]{0pt}{2.5ex} \multirow{2}{*}{$a$} &  \multirow{2}{*}{$N$}  & \multicolumn{3}{c}{relative error} \\ \cline{3-5}  
	   & &  GAS-I & GAS-II& IBP\\
    \hline
    \multirow{4}{*}{1.2} & $1000$ &  1.40$\times 10^{-3}$ &1.41 $\times 10^{-3}$ &3.44 $\times 10^{-2}$\\
    & $2000$  & 8.51 $\times 10^{-4}$ &8.60 $  \times 10^{-4}$ & 3.00 $  \times 10^{-2}$\\
    & $4000$ &1.42 $\times 10^{-3}$ & 1.44 $\times 10^{-3}$  & 3.38 $\times 10^{-2}$ \\
    & $8000$  &  N/A &N/A  & N/A\\
    \hline
    \multirow{5}{*}{1.5} & $1000$  &   2.69$\times 10^{-3}$&3.26 $\times 10^{-3}$ &6.15 $\times 10^{-3}$\\
    & $2000$& 2.70 $\times 10^{-3}$  &3.26 $  \times 10^{-3}$ &5.70 $  \times 10^{-3}$\\
    & $4000$& 2.72$\times 10^{-3}$ & 3.55$\times 10^{-3}$ &5.59 $\times 10^{-3}$\\
    & $8000$  & 2.75$\times 10^{-3}$  & 3.48$\times 10^{-3}$& 5.81 $\times 10^{-3}$\\
    & $16000$ &N/A & N/A & N/A\\
    \hline       
\end{tabular}
\label{tab1:subtab12}
}
\end{center}
  \caption{The 1D distribution OPT model. The comparison between the IBP, the GAS-I and the GAS-II with different $N$. Table 1(a) is the average computational time and Table 1(b) is the relative error with the ground truth obtained by Gurobi.}  
  \label{tab:1D}    
\end{table}

\begin{figure}[htb] 
    \centering
    \includegraphics[scale=0.4]{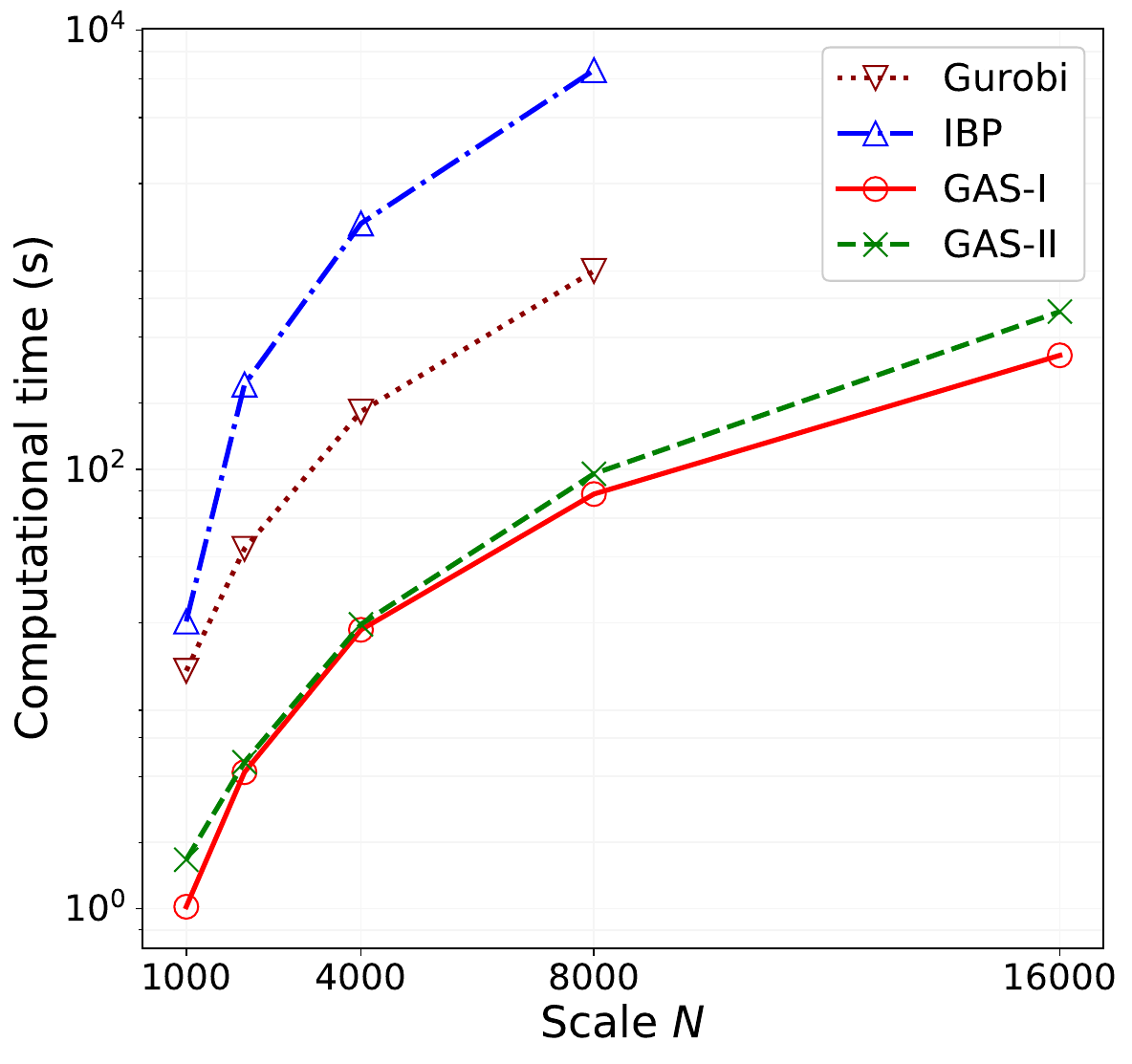}
    \caption{The 1D random distribution optimal production transport problem. The comparison of computational time between Gurobi, IBP, GAS-I, and GAS-II with different numbers of grid points $N$ and the upper bound parameter of $\bm{u}$ is $a=1.5$.}
    \label{fig:time_1D}
\end{figure}

The performance and computational cost of Gurobi, IBP algorithm, GAS-I, and GAS-II with $a=1.2$ and $1.5$ are presented in \cref{tab:1D}.
From \cref{tab:1D}, it is clear that the average computational time of GAS-I and GAS-II are much less than IBP and Gurobi, and we can see that our proposed method achieves up to more than 50 times speedup compared to IBP when $N=2000$ with $a=1.5$. The relative error of GAS-I and GAS-II are significantly smaller than IBP with the ground truth obtained by Gurobi. In \cref{fig:time_1D}, we visually compare the computational time of the three algorithms with the upper bound parameter $a=1.5$. We can see that Gurobi and IBP fail to generate the solutions when $N=16000$ due to time and memory limitations.

\subsection{2D Case}

In the 2D case, we consider the nonuniform grid points and generate two discrete distributions on the grid points from the standard uniform distribution on [0,1] 
$$\bm{u}=(u_{1,1},u_{1,2},\cdots, u_{1,N};\ \cdots;\ u_{N,1},u_{N,2},\cdots,u_{N,N}),$$
$$\bm{v}=(v_{1,1},v_{1,2},\cdots, v_{1,N};\ \cdots;\ v_{N,1},v_{N,2},\cdots,v_{N,N}).$$
Then $\bm{u}$ and $\bm{v}$ are normalized so that 
$$\sum_{i=1}^{N}\sum_{j=1}^{N}u_{ij}=1,\quad \sum_{i=1}^{N}\sum_{j=1}^{N}v_{ij}=1.$$
The coordinates of $\bm{u}$ and $\bm{v}$ on the 2D plane are generated in the following way. Let $(\bm{u}_x,\bm{u}_y)$ (or $(\bm{v}_x,\bm{v}_y)$) be the coordinates of $\bm{u}$ (or $\bm{v}$), and $\bm{u}_x,\bm{u}_y$ (or $\bm{v}_x,\bm{v}_y$) are generated from the standard uniform distribution of $[0,1]^N$. 
The lower bound $\bm{\hat{u}}$ and upper bound $\bm{\overline{u}}$ of $\bm{u}$ are set to
\begin{align*}
    &\bm{\hat{u}}=\bm{k}\odot \bm{u} = \left(k_{1,1}u_{1,1},\cdots, k_{1,N} u_{1,N};\ \cdots;\ k_{N,1} u_{N,1},\cdots,k_{N,N} u_{N,N}\right),\
    \bm{\overline{u}}= 1.5 \bm{u},
\end{align*}
where $\bm{k}\in \mathbb{R}^{N\times N}$ is generated from the standard uniform distribution on $[0,1]$.

Referring to \cite{wu2022double} and trying to guarantee that
the feasible set is non-empty, $\bm{\eta} \in \mathbb{R}^{N^4}, \eta_{i_1,j_1;i_2,j_2} = 2 \overline{u}_{i_1,j_1} \cdot v_{i_2,j_2} $ is chosen as the upper bound matrix of $\bm{\gamma}$.
Now we employ the Wasserstein-2 metric as the transport cost and generate the production cost $\bm{p}\in \mathbb{R}^{N\times N}$ by taking the absolute value of random numbers from the normal distribution $\mathcal{N}(1,0.25)$ with a scale parameter $d\in\{0.1,0.2,0.5\}$, the cost $C_{i_1,j_1;i_2,j_2}$ is
$$C_{i_1,j_1;i_2,j_2}=(u_{x_{i_1}}-v_{x_{i_2}})^2+ (u_{y_{j_1}}-v_{y_{j_2}})^2+d \cdot p_{i_1,j_1},$$
and divided by its maximal entry for normalization.

\begin{table}
\begin{center}
\centering
\subtable[The Average Computational Time]{
\begin{tabular}{c c| c c c c }
	\hline
	\rule[-1ex]{0pt}{2.5ex} \multirow{2}{*}{$d$} &  \multirow{2}{*}{$N$}  &   \multicolumn{4}{c}{times (sec)}  \\ \cline{3-6}  
	   & & GAS-I &GAS-II & IBP & Gurobi \\
    \hline
    \multirow{4}{*}{0.1} & $20\times 20$ &  0.21&0.24  &  2.77 & 3.37 \\
    & $40\times  40$   & 2.81&3.20 & 104.3  &41.3 \\
    & $80 \times  80$ &43.3 &51.5 & 3120.8 &796.3  \\
    & $160 \times  160$ & 827.7& 1008.9  &- & - \\
    \hline
    \multirow{4}{*}{0.2} & $20 \times  20$ &   0.21&0.25  & 3.05 & 2.72  \\
    & $40\times  40$ &  2.87 & 2.63  &104.3  &47.5  \\
    & $80 \times 80$ &  38.3  & 42.7&  3442.9 &493.4   \\
    & $160 \times  160$ &775.9 & 1055.0 & - &- \\
    \hline
    \multirow{4}{*}{0.5} & $20 \times  20$ &  0.19&0.23& 3.13 & 2.95 \\
    & $40 \times  40$ &  2.85&2.86 & 117.1 &44.5 \\
    & $80\times  80$ & 38.4 & 43.0& 3553.7  & 502.4  \\
    & $160\times  160$ & 804.5 & 955.0 &-  & - \\
    \hline       
\end{tabular}
\label{tab2:subtab11}
}
\subtable[The Relative Error]{
\begin{tabular}{c c| c c c}
	\hline
	\rule[-1ex]{0pt}{2.5ex} \multirow{2}{*}{$d$} &  \multirow{2}{*}{$N$}  & \multicolumn{3}{c}{relative error} \\ \cline{3-5}  
	   & &  GAS-I & GAS-II& IBP\\
    \hline
    \multirow{4}{*}{0.1} & $20\times 20$ & 2.75$\times 10^{-4}$ &5.98 $\times 10^{-4}$ &1.76 $\times 10^{-2}$\\
    & $40\times  40$   & 3.27 $\times 10^{-4}$ & 7.52 $  \times 10^{-4}$ &1.64 $  \times 10^{-2}$  \\
    & $80 \times  80$  & 3.40$\times 10^{-4}$    &  6.82 $\times 10^{-4}$  &  1.50 $\times 10^{-2}$ \\
    & $160 \times  160$ & N/A & N/A  & N/A\\
    \hline
    \multirow{4}{*}{0.2} & $20 \times  20$ &   4.56$\times 10^{-4}$  &8.84 $\times 10^{-4}$ \\
    & $40\times  40$ & 5.24 $\times 10^{-4}$  &  1.14 $  \times 10^{-3}$& 1.80 $  \times 10^{-2}$  \\
    & $80 \times 80$  &  5.65$\times 10^{-4}$     &  9.66 $\times 10^{-4}$  &  1.33 $\times 10^{-2}$  \\
    & $160 \times  160$ & N/A  &N/A & N/A\\
    \hline
    \multirow{4}{*}{0.5} & $20 \times  20$  & 5.00$\times 10^{-4}$ & 8.90 $\times 10^{-4}$  &2.22 $\times 10^{-2}$ \\
    & $40 \times  40$ & 5.76 $\times 10^{-4}$ & 1.19 $  \times 10^{-3}$  &1.94 $  \times 10^{-2}$  \\
    & $80\times  80$  &  6.23$\times 10^{-4}$     &   1.11 $\times 10^{-3}$  &   1.44 $\times 10^{-2}$ \\
    & $160\times  160$  & N/A  & N/A  & N/A\\
    \hline    
\end{tabular}
\label{tab2:subtab12}
}
\end{center}
  \caption{The 2D distribution OPT model. The comparison between the IBP, the GAS-I and the GAS-II algorithms with different $N$. Table 2(a) is the average computational time and Table 2(b) is the relative error with the ground truth obtained by Gurobi.}  
  \label{tab:2D}    
\end{table}

\begin{figure}[htb] 
    \centering
    \includegraphics[scale=0.4]{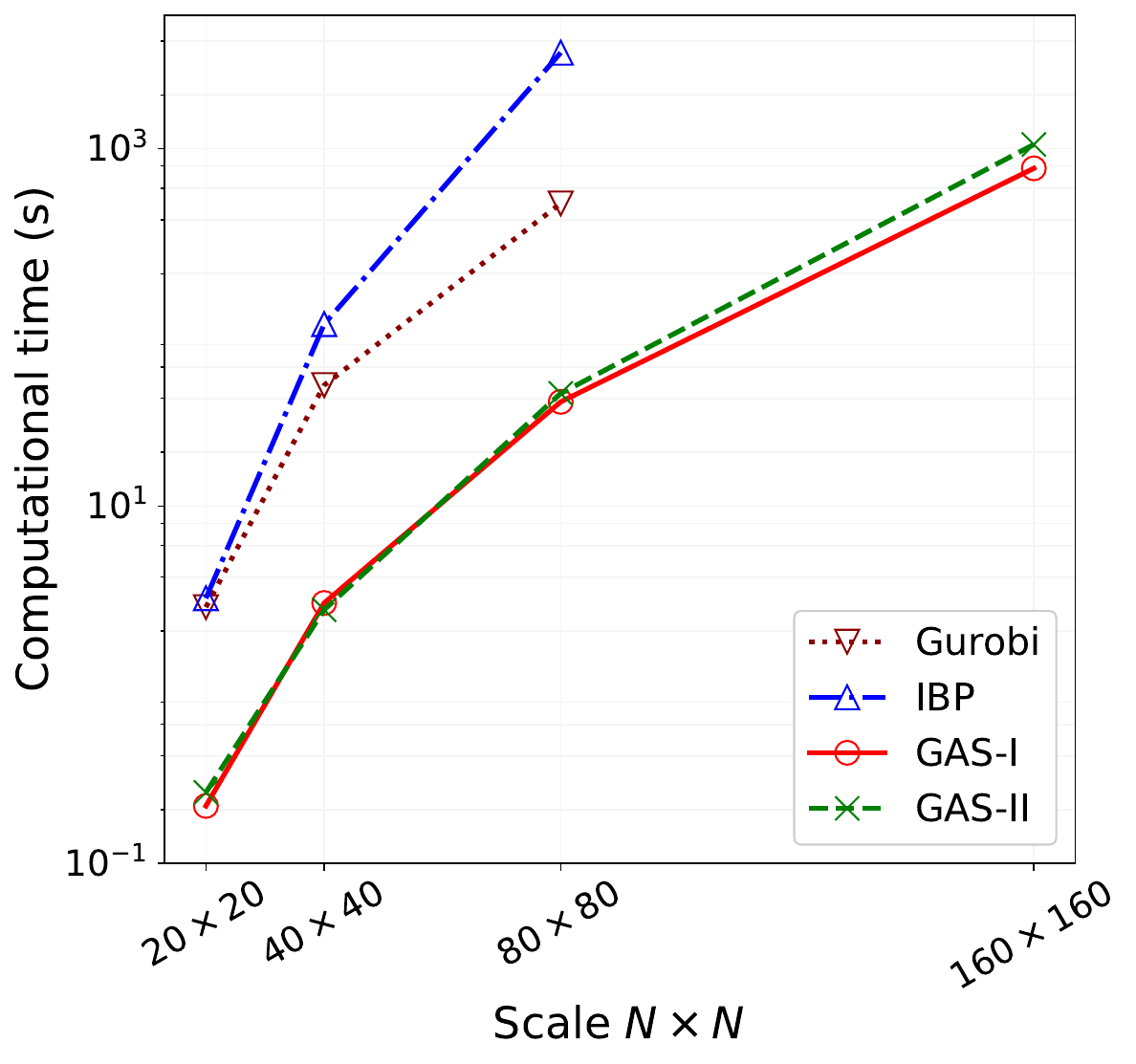}
    \caption{The 2D random distribution OPT model. The comparison of computational time between Gurobi, IBP, GAS-I, and GAS-II with different numbers of grid points $N\times N$ and the parameter of production cost is $d=0.2$.}
    \label{fig:time_2D}
\end{figure}

The performance and computational costs of Gurobi, IBP, GAS-I, and GAS-II with $d=0.1$, $0.2$, and $0.5$ are presented in \cref{tab:2D}.
From \cref{tab:2D}, it is clear that the average computational time of GAS-I and GAS-II are much less than IBP and Gurobi. We can find that the proposed method achieves up to 40 times speedup compared with IBP when $N=40$ with $d=0.2$ and $0.5$. The relative error of GAS-I and GAS-II are overwhelmingly smaller than IBP with the ground truth obtained by Gurobi. 
In \cref{fig:time_2D}, we visually compare the computational time of the three algorithms with the parameter of production cost is $d=0.2$. We can see that Gurobi and IBP fail to generate the solutions when $N=160$ due to time and memory limitations.

\subsection{Comparison of Memory Usage}

In \cref{fig:memory} (upper), we  present the memory usage of IBP, Gurobi, GAS-I and GAS-II with different numbers of grid points $N$ and the upper bound parameter of $\bm{u}$ is $a=1.5$ in the 1D case. We can find that the space complexity of IBP is $O(N^2)$ and it is $O(N)$ of GAS-I and GAS-II, which are both much less than Gurobi.

In \cref{fig:memory} (lower), we  present the memory usage of Gurobi, IBP, GAS-I, and GAS-II with different numbers of grid points $N\times N$ and  the parameter of production cost is $d=0.2$ in the 2D case. Similar to the 1D case, we can find that the space complexity of IBP is $O(N^4)$ and it is $O(N^2)$ of GAS-I and GAS-II, which are both much less than Gurobi.

\begin{figure}[htb] 
	\centering
	\subfigure{
		\includegraphics[scale=0.35]{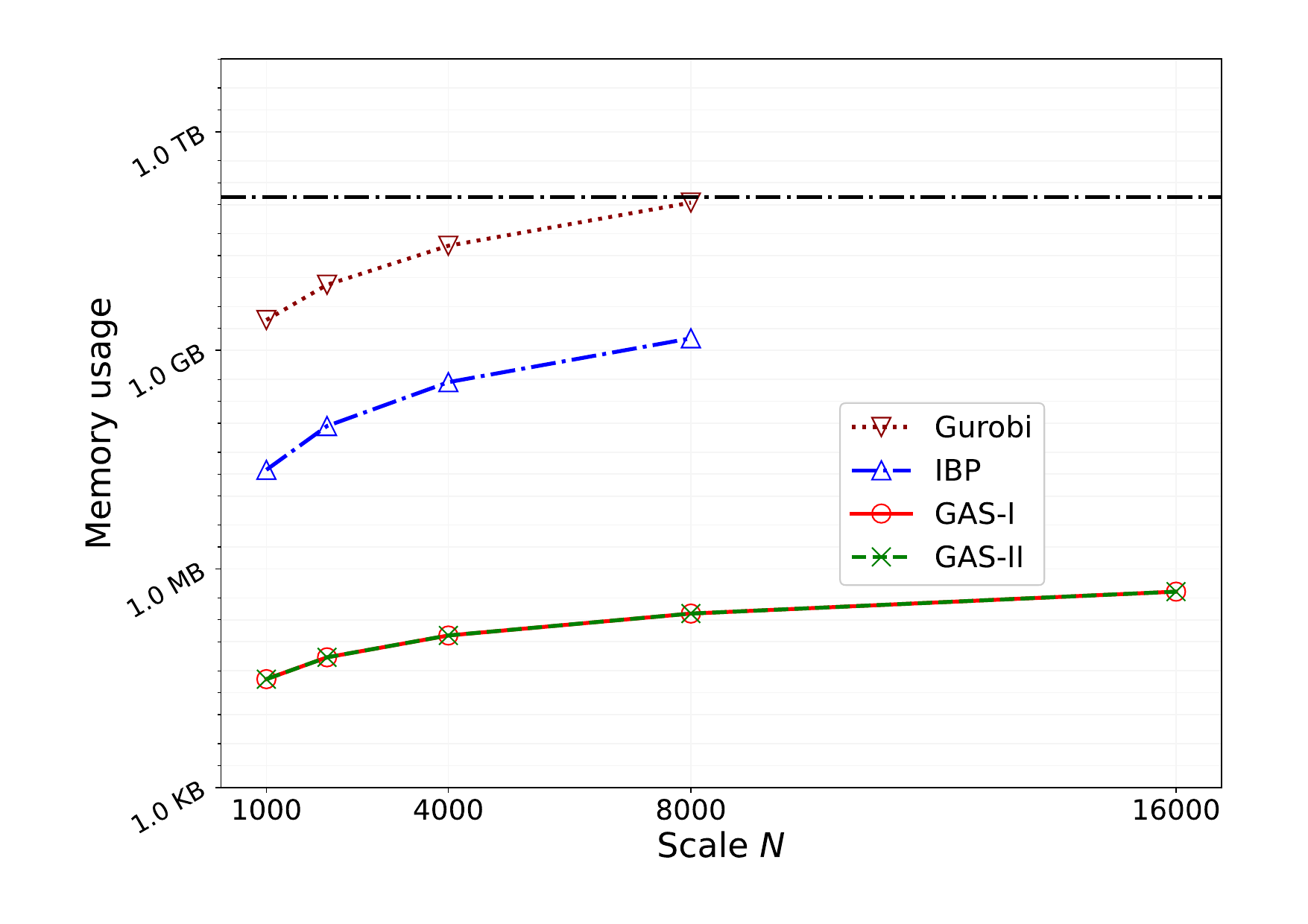}}
	\subfigure{
		\includegraphics[scale=0.35]{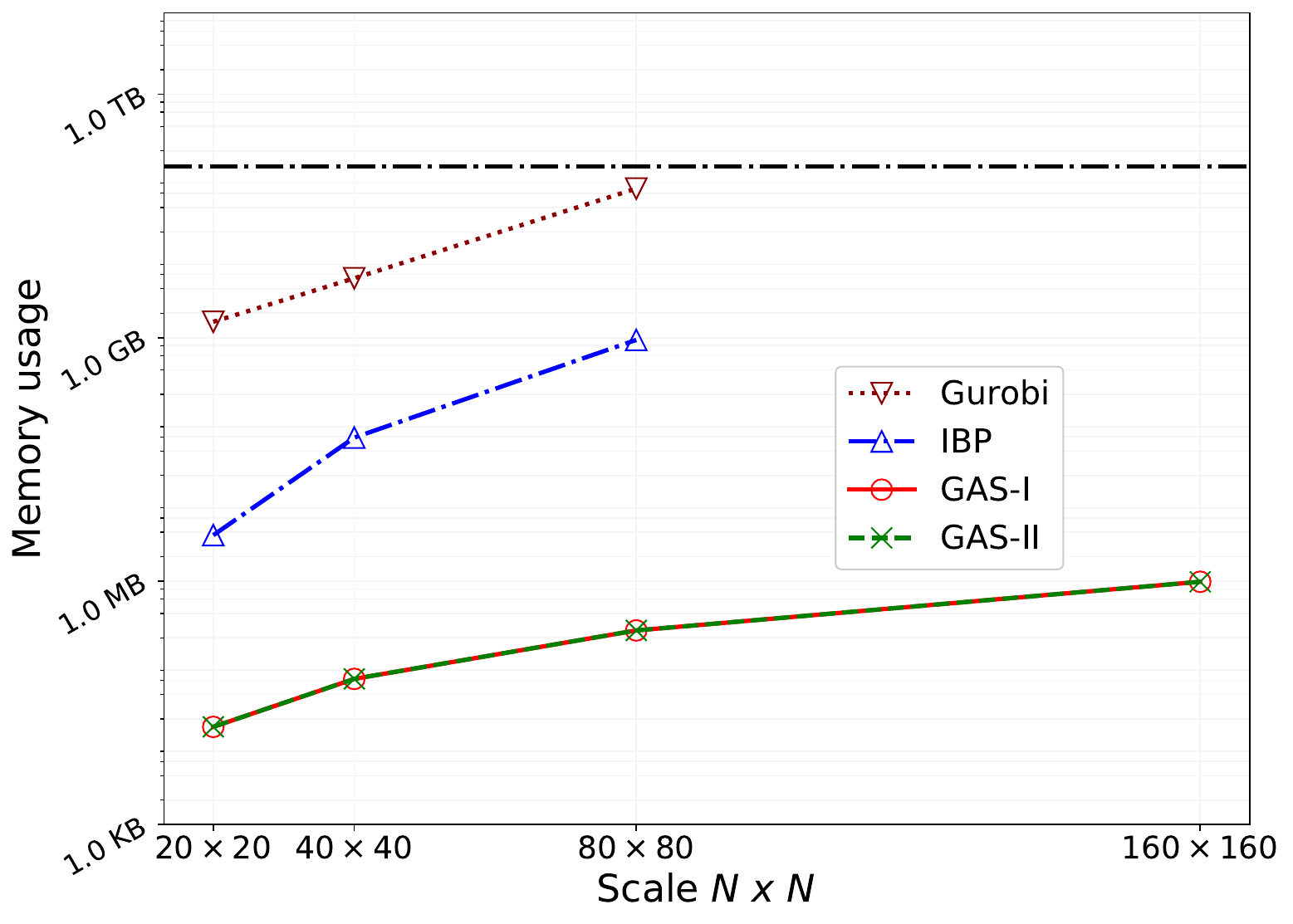}}
	\caption{The memory usage of Gurobi, IBP, GAS-I, and GAS-II in 1D case and 2D case. Upper: The comparison of memory usage among these four algorithms with different numbers of grid points $N\times N$ and the upper bound parameter of $\bm{u}$ is $a=1.5$. Lower: The comparison of memory usage among these four algorithms with different numbers of grid points $N$ and the parameter of production cost is $d=0.2$. }
	\label{fig:memory} 
\end{figure}

\section{Application: Coal Production and Transport}
\label{sec:applicationA}
In this section, we demo-nstrate the potential application of the OPT model in national coal production and transport. Considering the importance of coal in the national economy, several works have applied optimal transport theory to model and solve coal resource allocation problems \cite{kantorovich1960mathematical,cao2006coal,kozan2012demand}. Here, we consider additional production information in the modeling, which is a natural extension.

In the Chinese mainland, the distribution of coal is abundant but severely imbalanced. Specifically, most of the production of coal is concentrated in the western and northern regions, while the consumption of coal is mainly concentrated in the southeastern coastal region \cite{suping2015china,mao1999china}. Therefore, a suitable plan for the massive product and transportation of coal is extremely crucial. 
Based on the distribution of coal and the regulatory capacity of coal production, we model an OPT problem and solve it with our proposed GAS-I algorithm to minimize the cost of production and transport. 
Note that the regional imbalance between production and consumption is also quite common in other regions and resources, so our model and algorithm can be applied to make a plan for production and transport in other scenarios. 

\begin{table}[t]
\renewcommand\arraystretch{1.25}
\begin{scriptsize}
  \centering
    \begin{tabular}{l|rrcrr}
    	\hline
	\rule[-1ex]{0pt}{2.5ex} 
          & \multirow{2}{*}{Production} & \multirow{2}{*}{Consumption} & \multicolumn{1}{c}{Production} & \multicolumn{1}{c}{Production} & \multicolumn{1}{c}{Production} \\
        &   &   &  \multicolumn{1}{c}{Cost} &\multicolumn{1}{c}{Plan} & \multicolumn{1}{c}{Plan Difference} \\
    \hline
    Beijing  & 317.6 & 671.0  & 1.00  &  254.1  & -63.5 \\
    Tianjin  & 0.0   & 3348.7 & N/A   & 0.0   & 0.0  \\
    Hebei  & 6484.3  & 22249.4  & 0.19  & 5187.4  &  -1296.9 \\
    Shanxi  & 81641.5  & 28198.8  & 0.05  & 71403.7  &  -10237.8\\
    Inner Mongolia  & 83827.9  & 29033.4  & 0.02  & 67068.6  & -16759.3  \\
    Liaoning  & 4082.1  & 13413.2  & 0.04  & 6123.2 & 2041,1 \\
    Jilin  & 1643.1  & 7454.7  & 0.08  & 2464.6 & 821.5  \\
    Heilongjiang  & 5623.2  & 11110.1  & 0.04  & 8434.8  & 2811.6 \\
    Shanghai  & 0.0   & 3661.8  & N/A   & 0.0   & 0.0  \\
    Jiangsu  & 1367.9  & 22203.7  & 0.09  &   2051.8  & 683.9 \\
    Zhejiang  & 0.0   & 11042.1  & N/A   & 0.0   & 0.0  \\
    Anhui  & 12235.6  & 12451.4  & 0.08  & 18353.4  & 6117.8\\
    Fujian  & 1346.7  & 5404.1  & 0.06  & 2020.0 &  673.3\\
    Jiangxi & 1432.1  & 6030.3  & 0.06  & 2148.2  &  716.1 \\
    Shandong  & 12813.5  & 32408.9  & 0.14  & 17473.8 &  4660.3 \\
    Henan  & 11905.3  & 18386.9 & 0.16  & 9535.9  & -2369.4 \\
    Hubei  & 547.4  & 9250.9  & 0.11  &  821.1  & 273.7 \\
    Hunan  & 2595.5  & 9059.1  & 0.13  &  3893.2  &  1297.7\\
    Guangdong  & 0.0   & 12773.3  & N/A   & 0.0   & 0.0  \\
    Guangxi  & 399.6  & 5159.7  & 0.11  &  599.4 &  199.8 \\
    Hainan & 0.0   & 803.8  & N/A   & 0.0   & 0.0  \\
    Chongqing  & 2419.7  & 4492.0  & 0.07  & 3629.5  &  1209.8 \\
    Sichuan  & 6076.2  & 7021.4  & 0.13  & 7020.1  & 843.9 \\
    Guizhou  & 16662.2  & 10800.1  & 0.05  & 24993.3  & 8331.1  \\
    Yunnan  & 4251.8  & 5906.5  & 0.04  & 6377.7  & 2125.9  \\
    Shaanxi  & 51151.4  & 15572.0  & 0.02  & 47972.9 &  -3178.5 \\
    Gansu  & 4236.9  & 5048.7  & 0.03  & 5440.3  & 1203.4 \\
    Qinghai  & 774.6  & 1553.5 & 0.01  &   1161.9  &  387.3\\
    Ningxia  & 6728.4  & 6859.6  & 0.03  & 6859.6  &  131.2 \\
    Xinjiang  & 15834.0  & 15029.2 & 0.01  & 15029.2  & -804.8\\
    \hline
    \end{tabular}%
    \caption{The main production and consumption of coal in the Chinese mainland. Column 2 is the raw coal production of enterprises above the scale, and column 3 is the coal consumption. Column 4 is the relative cost to produce one unit of coal. 
    Column 5 is the optimal production plan computed by the OPT model. 
    Column 6 is the difference of the optimal production plan between OPT and the classical OT. }
  \label{tab:coal_basic}%
  \end{scriptsize}
\end{table}%

In \cref{tab:coal_basic}, we list the main production of raw coal and the consumption of coal in the Chinese mainland in 2016, and normalize the raw data to ensure that total production and consumption are equal\footnote{Data source: National Bureau of Statistics of China}. 
The production cost is set based on the average cost of coal among these provinces and municipalities. 
We use `N/A' (not available) to represent that the relative production cost with the production is zero. 
We consider simplifying the transport cost to be proportional to the spatial distance.
Thus, the transport cost of transporting one unit of coal is calculated by the longitude and latitude of provincial capitals and municipalities. The total cost is defined as the summation of the normalized production cost to the normalized transport cost. 
The production of coal cannot be arbitrarily large or arbitrarily small, due to the limitation of equipment and natural resources and the coal exploitation can not be shut down at will. Based on the real production curves in recent years, we set the upper bound of production as $1.5$ times the actual values  and the lower bound as $0.8$ times.

\begin{figure}[t] 
    \centering
    \includegraphics[scale=0.75, trim=100 50 100 50, clip]{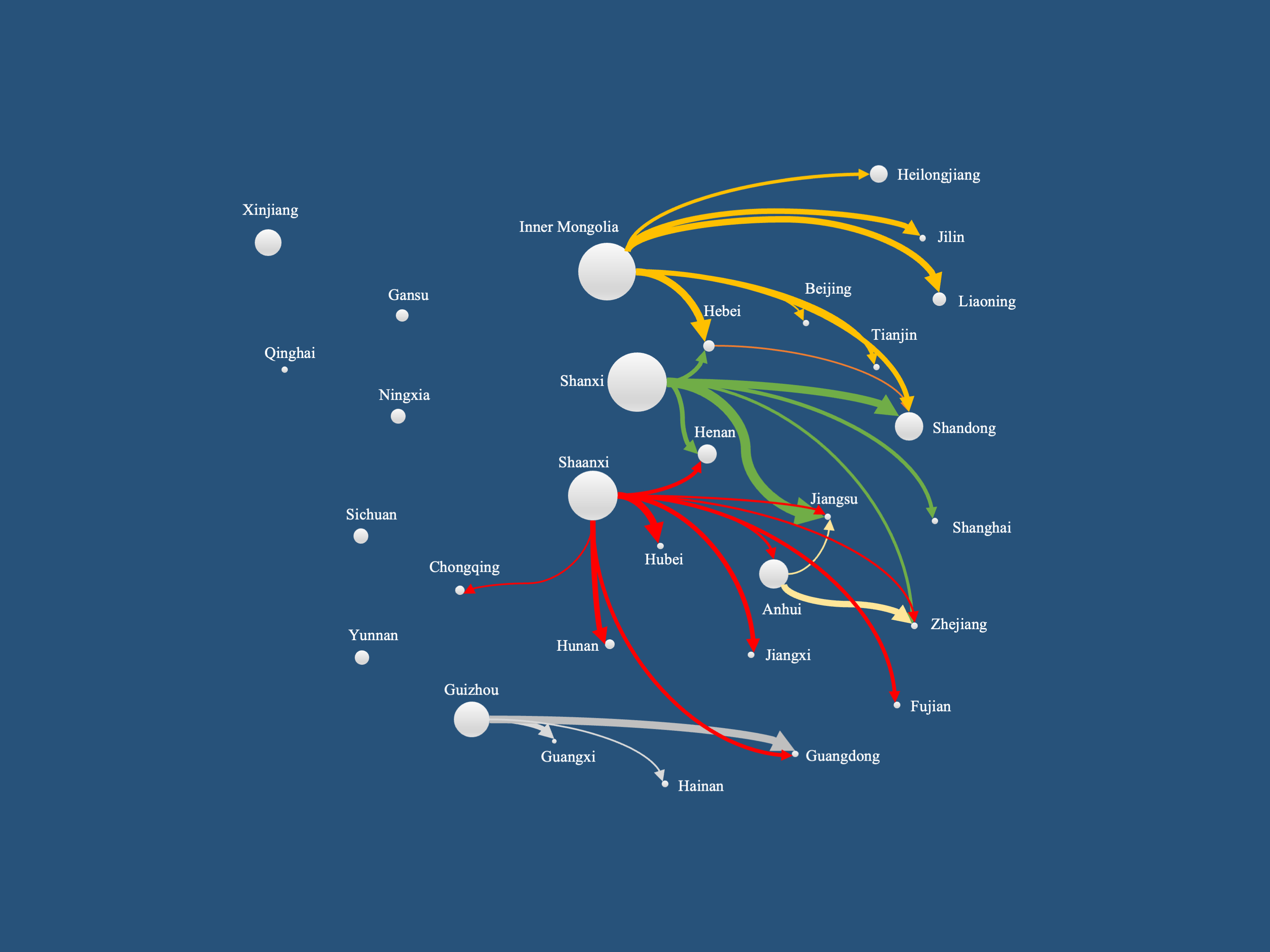}
    \caption{The main production plan and transport plan between different areas in the Chinese mainland.}
    \label{fig:coal}
\end{figure}

After modeling this practical problem to the OPT model, we utilize Gurobi and GAS-I to solve this model, in other words, to provide suggestions for the feasible production and transport of coal with minimum cost. The minimum cost computed by Gurobi and GAS-I is $26929.98$ and $27039.13$ respectively, and the relative error is $4.05\times 10^{-3}$. The minimum cost computed by the classical OT model is $31140.34$, and our OPT model can reduce the cost by $13.17\%$.
We show the production plan in Column 5 of \cref{tab:coal_basic} and the main transport plan between different areas in \cref{fig:coal}. We can see that the bulk of coal is exploited in Inner Mongolia, Shanxi, and Shaanxi, which are the main coal-producing provinces and produce more than half of coal according to the report from the National Bureau of Statistics of China. As for consumption, coal is mainly consumed in the northeastern industrial region and the southeastern coastal region, which is also consistent with the industry situation of the Chinese mainland. 
We also show the production plan in Column 5 of \cref{tab:coal_basic} and the main transport plan between different areas in \cref{fig:coal}.
The difference of the production between the classical OT model and the OPT model is presented in Column 6 of \cref{tab:coal_basic}. According to the results, it is suggested to reduce the coal production in Inner Mongolia, Shanxi, and Shaanxi, and increase the coal production in Guizhou, Anhui, and Shandong.
The above numerical results demonstrate the expressiveness of our proposed OPT model.

\section{Conclusions and Outlook}
\label{sec:conclusions}

Optimal transport has achieved significant success in economics, while we notice that variable production has not been taken into account in the economic application of optimal transport. Thus, in this paper, we propose the Optimal Production Transport~(OPT) model, which is an extension of the classical OT. In the OPT model, the production plan and transport plan need to be obtained simultaneously with the minimum production and transport cost. 
To solve this novel model, 
we propose two well-designed algorithms called GAS-I and GAS-II by introducing the regularized multiple relaxation OPT model with multiple relaxation variables and multiple regularization terms.
The convergence and the number of iterations of GAS-II are theoretically guaranteed, and comprehensive numerical experiments show that our proposed GAS-I and GAS-II algorithms have a remarkable advantage in accuracy, efficiency, and memory usage compared with IBP and Gurobi.
Furthermore, we demonstrate practical applications of the OPT model by considering the coal production and transport problem as an example. The results illustrate that utilizing the OPT model reduces the overall production and transport cost compared to the traditional OT model.

\bibliographystyle{siamplain}
\bibliography{main_references}

\newpage
\appendix
\section{IBP for OPT model} \label{sec:IBP}

Iterative Bregman projections~(IBP) algorithms are proposed in \cite{IBP} for solving several variants of the optimal transport model, e.g., Wasserstein barycenter problem \cite{agueh2011barycenters}, multi-marginal optimal transport \cite{pass2011uniqueness, pass2012local}, capacity-constrained optimal transport \cite{COT}, partial optimal transport \cite{POT}. The main idea of IBP is to split the constraints into an intersection of a few simple constraints, and project the solution onto each simple constraint set alternately. 
The entropic regularization of the OPT model \cref{OPT_d} can be solved by the idea of IBP.
Now we introduce the IBP algorithm, which is specifically designed to solve the entropic regularization of the OPT model \cref{OPT_d}, and to state the convergence result of the entropic regularization for the OPT model.

In the IBP algorithm, we define the entropic regularization model and split the constraints into an intersection of four hyperplane or half-space constraints. The solution is projected onto each of the four constraint sets alternately with the iterations of matrix operations. The convergence result of the entropic regularization w.r.t. $\varepsilon$ can also be proved. 
However, given that the number of subspaces becomes larger compared with the classical OT, and some of them are not affine, the computational time and convergence of the IBP for the OPT have been highly affected \cite{huang2021projection,su2019order}.

\subsection{Numerical Algorithm}
Similar to the entropic regularization for the classical OT, we define the  entropic regularized optimal production transport model as
\begin{equation}\label{OPT_ER}
    \begin{aligned}
		&W= \min_{\bm{\gamma}} \quad \sum_{i=1}^{N}\sum_{j=1}^{N} (C_{ij} \gamma_{ij}+\varepsilon \gamma_{ij}(\ln \gamma_{ij}-1))\\
  		&\mathrm{s.t.}\quad  \bm{\hat{u}}\leq \bm{\gamma}\cdot \bm{1_{N}} \leq \bm{\overline{u}}, \ \bm{\gamma^{T}}\cdot \bm{1_{N}}=\bm{v},\ 
		\gamma_{ij}\leq \eta_{ij} .
\end{aligned}
\end{equation}

The definition of the Kullback-Leibler(KL) divergence between $\bm{\gamma}\in \mathbb{R}_{++}^{N\times N}$ and $\bm{K}\in \mathbb{R}_{++}^{N\times N}$, where $\mathbb{R}_{++}$ is the set of strictly positive real numbers, is
\begin{equation}
    \mathrm{KL}(\bm{\gamma}|\bm{K})\overset{def}{=} \sum_{i,j=1}^{N} (\gamma_{ij} \ln (\frac{\gamma_{ij}}{Ki_{ij}})-\gamma_{ij}+K_{ij}).
\end{equation}
Given a convex set $\mathcal{C}\in\mathbb{R}^{N}\times \mathbb{R}^N$, the projection according to the KL divergence is defined as
\begin{equation}
   P_{\mathcal{C}}^{\mathrm{KL}}(\bm{K})\overset{def}{=}\mathrm{argmin}_{\bm{\gamma}\in \mathcal{C}} \mathrm{KL}(\bm{\gamma}|\bm{K}).
\end{equation}

So \cref{OPT_ER} can be recast in the form
\begin{align}
		W= \min_{\bm{\gamma}\in \mathcal{C}} \quad \mathrm{KL}(\bm{\gamma}|\bm{K}),
\end{align}
where $\bm{K}=e^{-\frac{\bm{C}}{\varepsilon}}$, $\mathcal{C}=\mathcal{C}_1\cap \mathcal{C}_2  \cap \mathcal{C}_3 \cap \mathcal{C}_4$, $\mathcal{C}_1=\{\bm{\gamma}\in \mathbb{R}^{N\times N}|\bm{\gamma}\cdot \bm{1_{N}}\geq \bm{\hat{u}}\}$, $\mathcal{C}_2=\{\bm{\gamma}\in \mathbb{R}^{N\times N}|\bm{\gamma}\cdot \bm{1_{N}}\leq \bm{\overline{u}}\} $, $\mathcal{C}_3=\{\bm{\gamma}\in \mathbb{R}^{N\times N}| \bm{\gamma^{T}}\cdot \bm{1_{N}} = \bm{v} \}$, $\mathcal{C}_4=\{\bm{\gamma}\in \mathbb{R}^{N\times N}| \bm{\gamma}\leq \bm{\eta} \}$.

\begin{proposition}
Denoting $	\bm{\gamma}^{(n)}=\mathrm{argmin}_{\bm{\gamma}\in C_n} \mathrm{KL}(\bm{\gamma}|\bm{\gamma}^{(n-1)})$, and $\mathcal{C}_{n+4} = \mathcal{C}_{n}$ for the sake of simplicity, one has
\begin{equation*}
    \begin{aligned}
    	& (a) \quad \mathrm{argmin}_{\bm{\gamma}\in \mathcal{C}_1} \mathrm{KL}(\bm{\gamma}|\bm{\gamma}^{(n-1)})=\mathrm{diag} \left(\max \left(\frac{\bm{\hat{u}}}{\bm{\gamma}^{(n-1)}\cdot \bm{1_{N}}},\bm{1}_N\right)\right) \bm{\gamma}^{(n-1)}.\\
     	& (b) \quad  \mathrm{argmin}_{\bm{\gamma}\in \mathcal{C}_2} \mathrm{KL}(\bm{\gamma}|\bm{\gamma}^{(n-1)})=\mathrm{diag} \left(\min \left(\frac{\bm{\overline{u}}}{\bm{\gamma}^{(n-1)}\cdot \bm{1_{N}}},\bm{1}_N\right)\right) \bm{\gamma}^{(n-1)}.\\
      	& (c) \quad  \mathrm{argmin}_{\bm{\gamma}\in \mathcal{C}_3} \mathrm{KL}(\bm{\gamma}|\bm{\gamma}^{(n-1)})=\bm{\gamma}^{(n-1)} \mathrm{diag} \left(\frac{\bm{v}}{(\bm{\gamma}^{(n-1)})^T \cdot \bm{1_{N}}}\right).\\
      	& (d) \quad  \mathrm{argmin}_{\bm{\gamma}\in \mathcal{C}_4} \mathrm{KL}(\bm{\gamma}|\bm{\gamma}^{(n-1)})=\min(\bm{\gamma}^{(n-1)},\bm{\eta}).
\end{aligned}
\end{equation*}

\end{proposition}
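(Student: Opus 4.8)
The plan is to treat each of the four formulas as the unique solution of a strictly convex KL-projection problem and to read it off from the first-order (KKT) optimality conditions, exploiting the fact that every one of the sets $\mathcal{C}_1,\dots,\mathcal{C}_4$ couples only single rows, only single columns, or only single entries of $\bm{\gamma}$. Writing $\bm{K}=\bm{\gamma}^{(n-1)}$ for brevity, the objective is separable with $\partial \mathrm{KL}(\bm{\gamma}|\bm{K})/\partial \gamma_{ij}=\ln(\gamma_{ij}/K_{ij})$, so in each case the stationarity equation is elementary and the only real work is resolving the multiplier. Strict convexity of $\mathrm{KL}(\cdot\,|\bm{K})$ guarantees uniqueness, so producing a point that satisfies the KKT system suffices to identify the projection.

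I would dispatch the two cases with the cleanest structure first. For (c), the constraint $\bm{\gamma}^{T}\bm{1}_N=\bm{v}$ is a family of equalities, one per column; introducing a multiplier $\mu_j$ for each and setting the gradient of the Lagrangian to zero gives $\gamma_{ij}=K_{ij}e^{-\mu_j}$, and substituting into $\sum_i \gamma_{ij}=v_j$ forces $e^{-\mu_j}=v_j/\sum_i K_{ij}$, which is exactly the stated column rescaling. For (d), the box constraint $\bm{\gamma}\leq\bm{\eta}$ decouples entirely into scalar problems $\min_{\gamma\leq\eta}\,(\gamma\ln(\gamma/K)-\gamma+K)$; since the scalar integrand is strictly convex with unconstrained minimizer at $\gamma=K$ and is strictly decreasing for $\gamma<K$, the minimizer is $K$ when $K\leq\eta$ and the boundary value $\eta$ otherwise, i.e.\ $\min(K,\eta)$.

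For the inequality cases (a) and (b), I would invoke the KKT conditions together with complementary slackness, noting that $\bm{\gamma}\bm{1}_N\geq\bm{\hat{u}}$ and $\bm{\gamma}\bm{1}_N\leq\bm{\overline{u}}$ each constrain a single row. Stationarity yields a per-row scaling $\gamma_{ij}=K_{ij}e^{\pm\mu_i}$ with $\mu_i\geq 0$, and complementary slackness splits the analysis into two branches: if the $i$-th row of $\bm{K}$ (the unconstrained minimizer) already satisfies the constraint, then $\mu_i=0$ and the scale factor is $1$; otherwise the constraint is active and the factor is $\hat{u}_i/\sum_j K_{ij}$ in (a) or $\overline{u}_i/\sum_j K_{ij}$ in (b). Combining the two branches produces precisely the $\max(\cdot,1)$ in (a) and the $\min(\cdot,1)$ in (b).

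The only delicate point is the consistency of this case analysis with the sign dictated by the inequality direction: a lower bound forces an upward rescaling $e^{\mu_i}\geq 1$ and an upper bound a downward rescaling $e^{-\mu_i}\leq 1$, and one must verify that the active branch is selected exactly when the unconstrained row sum $\sum_j K_{ij}$ violates the bound, so that the $\max$ and $\min$ fall the correct way round. Everything else reduces to the scalar monotonicity already used in (d), so I expect no further obstacles.
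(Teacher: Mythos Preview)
Your proposal is correct and actually more systematic than the paper's own argument. The paper dispatches (b), (c), (d) by citing the IBP reference \cite{IBP} and only treats (a) directly, via a monotonicity observation: since $\partial \mathrm{KL}(\bm{\gamma}|\bm{\gamma}^{(n-1)})/\partial\gamma_{ij}=\ln(\gamma_{ij}/\gamma^{(n-1)}_{ij})$ changes sign at $\gamma_{ij}=\gamma^{(n-1)}_{ij}$, any row already meeting the lower bound should be left alone, and any deficient row should be ``increased in equal proportions'' until the constraint binds. Your route---writing the Lagrangian, reading off $\gamma_{ij}=K_{ij}e^{\pm\mu_i}$ from stationarity, and using complementary slackness to split the $\mu_i=0$ and active-constraint branches---is the same idea made rigorous, and it has the advantage of \emph{deriving} the proportional scaling rather than asserting it. The paper's version is shorter only because it outsources three of four cases; your unified KKT treatment is self-contained and arguably cleaner, with no loss and a small gain in rigor on exactly the point (why equal proportions?) the paper leaves implicit.
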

\begin{proof}
    The correctness of (b), (c), (d) can be found in \cite{IBP}, we just need to prove that (a) is correct.
\begin{equation}
        \frac{\partial \mathrm{KL}(\bm{\gamma}|\bm{\gamma}^{(n-1)})}{\partial \gamma_{ij}}=\ln \gamma_{ij}-\ln \gamma^{(n-1)}_{ij}\left\{
    \begin{aligned}
        <0,\quad \gamma_{ij}<\gamma^{(n-1)}_{ij},\\
        =0,\quad \gamma_{ij}=\gamma^{(n-1)}_{ij},\\
        >0,\quad \gamma_{ij}>\gamma^{(n-1)}_{ij}.
    \end{aligned}
    \right.
\end{equation}
So $\gamma_{ij}$ is closer to $\gamma^{(n-1)}_{ij}$, $\mathrm{KL}(\bm{\gamma}|\bm{\gamma}^{(n-1)})$ is smaller. 
Considering that $\bm{\gamma}\in \mathcal{C}_1= \{\bm{\gamma}\in \mathbb{R}^{N\times N}|\bm{\gamma}\cdot \bm{1_{N}}\geq \bm{\hat{u}}\}$, if $\sum_{j=1}^{N} \gamma^{(n-1)}_{ij}\geq \hat{u}_i$ for some $j$ holds, then for these $j$ and any $i$, the optimal solution has to have $\gamma_{ij}=\gamma^{(n-1)}_{ij}$, which is consistent with the conclusion (a). 
It is easy to find that for fixed $j$, if $\sum_{j=1}^{N} \gamma^{(n-1)}_{ij}< \hat{u}_i$, the optimal solution for $ \mathrm{KL}(\bm{\gamma}|\bm{\gamma}^{(n-1)})$ is to increase $\gamma^{(n-1)}_{ij}$ in equal proportions so that the condition $\sum_{j=1}^{N} \gamma_{ij}= \hat{u}_i$ is satisfied, which is also consistent with the conclusion (a). So the conclusions (a) holds.

Summarize the above, the conclusions (a-d) are correct.
\end{proof}

Now we introduce the IBP algorithm for the OPT model.
Starting from $\bm{\gamma}^{(0)}=\bm{K}$ and $\forall n \geq 1$ computing 
\begin{align*}
\bm{\gamma}^{(n)}=P_{\mathcal{C}_{n}}^{\mathrm{KL}}(\bm{\gamma}^{(n-1)})=
\left\{
\begin{aligned}
&\mathrm{diag} \left(\max \left(\frac{\bm{\hat{u}}}{\bm{\gamma}^{(n-1)}\cdot \bm{1_{N}}},\bm{1}_N\right)\right) \bm{\gamma}^{(n-1)},
&n \equiv 1\pmod{4},\\
&\mathrm{diag} \left(\min \left(\frac{\bm{\overline{u}}}{\bm{\gamma}^{(n-1)}\cdot \bm{1_{N}}},\bm{1}_N\right)\right) \bm{\gamma}^{(n-1)},
&n \equiv 2\pmod{4},\\
&\bm{\gamma}^{(n-1)} \mathrm{diag} \left(\frac{\bm{v}}{(\bm{\gamma}^{(n-1)})^T \cdot \bm{1_{N}}}\right),
&n \equiv 3\pmod{4},\\
&\min(\bm{\gamma}^{(n-1)},\bm{\eta}),
&n \equiv 0\pmod{4}.
\end{aligned}
\right.
\end{align*}

\subsection{Convergence Analysis}
Now we prove the convergence result of the entropic regularization w.r.t. $\varepsilon$.
\begin{theorem}
	The optimal solution of model \cref{OPT_ER} is unique. Let $\bm{\gamma}_{\varepsilon}$ be the optimal solution of model \cref{OPT_ER} with $\bm{u}_{\varepsilon}=\bm{\gamma}_{\varepsilon}\cdot \bm{1}_{N}$. There is ($\bm{\gamma}^*,\bm{ u}^*$), which is one of the optimal solutions of \cref{OPT_d}, so that $\bm{\gamma}_{\varepsilon}$ converges to $\bm{\gamma}^*$, $\bm{u}_{\varepsilon}$ converges to $\bm{u}^*$ as $\varepsilon\rightarrow 0$.
\end{theorem}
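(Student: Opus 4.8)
The plan is to mirror the two-part structure used for the convergence theorem of Section~\ref{sec:MRM}, since \cref{OPT_ER} is again a strictly convex regularization of \cref{OPT_d}. For uniqueness, I would note that $x \mapsto x(\ln x - 1)$ is strictly convex on $(0,\infty)$ (its second derivative equals $1/x > 0$), so the objective of \cref{OPT_ER} is the sum of the linear term $\langle\bm{C},\bm{\gamma}\rangle$ and a strictly convex separable term $\varepsilon\,H(\bm{\gamma})$, where $H(\bm{\gamma}) := \sum_{i,j=1}^{N}\gamma_{ij}(\ln\gamma_{ij}-1)$; it is therefore strictly convex. As its feasible set is the intersection of affine equalities and half-spaces and hence convex, the minimizer $\bm{\gamma}_\varepsilon$ is unique.

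For the convergence I would argue exactly as in that earlier proof. Fix any sequence $\varepsilon_k \downarrow 0$ and let $\bm{\gamma}_k$ be the corresponding minimizer. The feasible region
$$\Gamma' = \left\{\bm{\gamma}\in\mathbb{R}^{N\times N} : \bm{\hat{u}}\le \bm{\gamma}\bm{1}_N \le \bm{\overline{u}},\ \bm{\gamma}^T\bm{1}_N = \bm{v},\ \bm{0}\le\bm{\gamma}\le\bm{\eta}\right\}$$
is closed and bounded, so $\{\bm{\gamma}_k\}$ admits a subsequence (not relabeled) converging to some $\hat{\bm{\gamma}}\in\Gamma'$. Combining the regularized optimality of $\bm{\gamma}_k$ tested against $\bm{\gamma}^*$ with the unregularized optimality $\langle\bm{C},\bm{\gamma}^*\rangle \le \langle\bm{C},\bm{\gamma}_k\rangle$, I would derive the sandwich
$$0 \le \langle\bm{C},\bm{\gamma}_k\rangle - \langle\bm{C},\bm{\gamma}^*\rangle \le \varepsilon_k\left(H(\bm{\gamma}^*) - H(\bm{\gamma}_k)\right).$$
Sending $k\to\infty$ then drives the cost gap to zero, and continuity of $\langle\bm{C},\cdot\rangle$ yields $\langle\bm{C},\hat{\bm{\gamma}}\rangle = \langle\bm{C},\bm{\gamma}^*\rangle$, so $\hat{\bm{\gamma}}$ is optimal for \cref{OPT_d}; this is the $\bm{\gamma}^*$ promised in the statement. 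The marginal claim is then immediate, since $\bm{u}_\varepsilon = \bm{\gamma}_\varepsilon\bm{1}_N$ is a continuous (linear) image of $\bm{\gamma}_\varepsilon$, whence $\bm{u}_\varepsilon \to \hat{\bm{\gamma}}\bm{1}_N =: \bm{u}^*$ along the subsequence.

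The one step requiring genuine care is the boundedness of the right-hand side of the sandwich. An optimal $\bm{\gamma}^*$ of \cref{OPT_d} may have zero entries, so $H(\bm{\gamma}^*)$ must be interpreted carefully. I would handle this by extending $x \mapsto x(\ln x - 1)$ continuously to $x=0$ with value $0$, using $\lim_{x\to 0^+} x(\ln x - 1) = 0$; the resulting map is continuous, hence bounded, on the compact interval $[0,\max_{ij}\eta_{ij}]$, so both $H(\bm{\gamma}^*)$ and $H(\bm{\gamma}_k)$ are finite and bounded uniformly in $k$. Consequently $\varepsilon_k\left(H(\bm{\gamma}^*) - H(\bm{\gamma}_k)\right)\to 0$, which is exactly what the argument needs. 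This boundedness-at-zero point is the main obstacle; the remaining steps are routine compactness and continuity arguments, identical in spirit to the proof in Section~\ref{sec:MRM}.
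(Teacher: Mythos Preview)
Your proposal is correct and follows essentially the same approach as the paper's own proof: uniqueness via strict convexity of $x\mapsto x(\ln x-1)$, then compactness of $\Gamma'$ plus the sandwich inequality $0\le\langle\bm{C},\bm{\gamma}_k\rangle-\langle\bm{C},\bm{\gamma}^*\rangle\le\varepsilon_k(H(\bm{\gamma}^*)-H(\bm{\gamma}_k))$ to force the cost gap to zero along a subsequence. Your explicit treatment of the boundary case $x=0$ via the continuous extension of $x(\ln x-1)$ is slightly more careful than the paper, which simply appeals to continuity on a closed bounded domain.
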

\begin{proof}
Firstly, we prove that the optimal solution of \cref{OPT_ER} is unique. As we all know, $h(x)=x(\ln x-1)$ is strictly convex functions, so the model \cref{OPT_ER} is still strictly convex and has a unique optimal solution.

 Then we prove the convergence. We can find that $\bm{u}^*=\bm{\gamma}^*\cdot \bm{1_{N}}$ and $\bm{u}_{\varepsilon}=\bm{\gamma}_{\varepsilon}\cdot \bm{1_{N}}$, so $\bm{u}_{\varepsilon}$ converges to $\bm{u}^*$ if and only if $\bm{\gamma}_{\varepsilon}$ converges to $\bm{\gamma}^*$,
 so we just need to prove that $\bm{\gamma}_{\varepsilon}$ converges to $\bm{\gamma}^*$.
 
 Now we prove that $\bm{\gamma}_{\varepsilon}$ converges to $\bm{\gamma}^*$ as $\varepsilon\rightarrow 0$.
 For any sequence $\{\varepsilon_k\}_{k=1}^{\infty}$, where $\varepsilon_k>0$ and $\lim_{k\rightarrow \infty} \varepsilon_k=0$. Suppose the optimal solution of \cref{OPT_ER} with the regularization parameter $\varepsilon_k$ is $\bm{\gamma}_k$.
 $\bm{\gamma}_k$ is a feasible solution of \cref{OPT_ER} if and only if 
 $$\bm{\gamma}_k \in \Gamma'=\{\bm{\hat{u}}\leq \bm{\gamma}\cdot \bm{1_{N}} \leq  \bm{\overline{u}}, \bm{\gamma}^{T}\cdot \bm{1_{N}}=\bm{v}, \bm{0}\leq \bm{\gamma} \leq \bm{\eta}\}.$$ 
 Consider that $\Gamma'$ is closed and bounded, there exists a subsequence of $\bm{\gamma}_k$ converging to $\hat{\bm{\gamma}}\in \Gamma'$. For the sake of simplicity, we still use the same symbol $\bm{\gamma}_k$ to represent the subsequence.
 
 Now we can find $\bm{\gamma}_{k}, \bm{\gamma}^*$ and $\hat{\bm{\gamma}}$ are all feasible solution of \cref{OPT_d} and \cref{OPT_ER}. Because $\bm{\gamma}^*$ is the optimal solution of \cref{OPT_d}, so for any $\bm{\gamma}_k$, $\langle \bm{C}, \bm{\gamma}^*\rangle \leq \langle \bm{C}, \bm{\gamma}_k \rangle$ holds. 
 Because $\bm{\gamma}_k$ is the optimal solution of \cref{OPT_ER} with $\varepsilon=\varepsilon_k$, so 
 \begin{equation*}
     \begin{aligned}
     \langle \bm{C}, \bm{\gamma}_k\rangle +\varepsilon_k(\langle \bm{\gamma}_k, \ln \bm{\gamma}_k \rangle \leq \langle \bm{C}, \bm{\gamma}^* \rangle +\varepsilon_k(\langle \bm{\gamma}^*, \ln \bm{\gamma}^* \rangle
 \end{aligned}
 \end{equation*}
 holds,
 then we have
 \begin{equation*}
      \begin{aligned}
     0\leq \langle \bm{C}, \bm{\gamma}_k\rangle - \langle \bm{C}, \bm{\gamma}^* \rangle
     \leq \varepsilon_k(\langle \bm{\gamma}^*, \ln \bm{\gamma}^* \rangle-\langle \bm{\gamma}_k, \ln \bm{\gamma}_k \rangle ).
 \end{aligned}
 \end{equation*}
 Note that the inner product function and logarithmic function are continuous and the definition domain is closed and bounded, so each term in the right-hand equation is bounded with different $k$. 
 Now we let the limit $k\rightarrow \infty$, then $\varepsilon_k\rightarrow 0$, and we can find that $\langle \bm{C}, \bm{\gamma}_k\rangle - \langle \bm{C}, \bm{\gamma}^* \rangle \leq 0$ holds, which means that $\langle \bm{C}, \bm{\gamma}_k\rangle - \langle \bm{C}, \bm{\gamma}^* \rangle = 0$ and $\hat{\bm{\gamma}}$ is the optimal solution of \cref{OPT_d}.
\end{proof}

\end{document}